\newcommand{\bbC}{\mathbb{C}}
\newcommand{\bbR}{\mathbb{R}}
\newcommand{\bbF}{\mathbb{F}}
\newcommand{\g}{\mathfrak{g}}
\newcommand{\h}{\mathfrak{h}}
\newcommand{\so}{\mathfrak{so}}
\renewcommand{\sl}{\mathfrak{sl}}
\renewcommand{\sp}{\mathfrak{sp}}
\newcommand{\su}{\mathfrak{su}}
\newcommand{\Stab}{\operatorname{Stab}}
\newcommand{\epi}{\operatorname{epi}}
\newcommand{\rank}{\operatorname{rank}}
\newcommand{\Hom}{\operatorname{Hom}}
\newcommand{\Aut}{\operatorname{Aut}}
\newcommand{\Ad}{\operatorname{Ad}}
\newcommand{\compose}{\circ}
\newcommand{\AG}{\mathsf{A}}
\renewcommand{\O}{\mathrm{O}}
\newcommand{\SO}{\mathrm{SO}}
\newcommand{\SU}{\mathrm{SU}}
\newcommand{\GL}{\mathrm{GL}}
\newcommand{\PGL}{\mathrm{PGL}}
\newcommand{\SL}{\mathrm{SL}}
\newcommand{\Sp}{\mathrm{Sp}}
\numberwithin{equation}{section}
\newtheorem{thm}{Theorem}[section]
\newtheorem{cor}[thm]{Corollary}
\newtheorem{prop}[thm]{Proposition}
\newtheorem{lemma}[thm]{Lemma}
\newtheorem{defn}[thm]{Definition}
\newtheorem{lem}[thm]{Lemma}
\newtheorem*{claim*}{Claim}
\begin{document}

\title[Representations of Fuchsian groups]{{Representation Varieties\\ of Fuchsian Groups} \vskip .2in
\centerline{\hbox{\tiny \mdseries \it Dedicated to the memory of Leon Ehrenpreis}}}

\author{Michael Larsen}
\address{Michael Larsen,
Department of Mathematics,
Indiana University,
Bloomington, IN
U.S.A. 47405}

\author{Alexander Lubotzky
}
\address{Alexander Lubotzky, Einstein Institute of Mathematics,
Edmond J. Safra Campus, Givat Ram,
The Hebrew University of Jerusalem,
Jerusalem, 91904, Israel \\
}

\thanks{ML was partially supported by the National Science Foundation and the United States-Israel Binational  Science Foundation.
AL was partially supported by the European Research Council and the Israel Science Foundation.}


\begin{abstract}
We estimate the dimension of varieties of the form
$\Hom(\Gamma,G)$ where $\Gamma$ is a Fuchsian group and $G$ is a simple real algebraic group,
answering along the way a question of I.\ Dolgachev.
\end{abstract}

\maketitle

\section{Introduction}

Let $G$ be an almost simple real algebraic group, i.e., a non-abelian linear algebraic group over $\bbR$
with no proper normal $\bbR$-subgroups of positive dimension.
Let $\Gamma$ be a finitely generated group.  The set of representations
$\Hom(\Gamma, G(\bbR))$ coincides with the set of real points of the
representation variety $X_{\Gamma,G} := \Hom(\Gamma, G)$.
(We note here, that by a \emph{variety}, we mean an affine scheme of finite type over $\bbR$;
in particular, we do not assume that it is irreducible or reduced.)

Let $X^{\epi}_{\Gamma,G}$ denote the Zariski-closure in $X_{\Gamma,G}$ of
the set of \emph{Zariski-dense homomorphisms}
 $\Gamma\to G(\bbR)$, i.e., homomorphisms with Zariski-dense image.
In this paper, we estimate the dimension of $X^{\epi}_{\Gamma,G}$
when $\Gamma$ is a cocompact Fuchsian group.  Our main results
assert that in most cases, this dimension is roughly $(1-\chi(\Gamma))\dim G$,
where $\chi(\Gamma)$ is the Euler characteristic of $\Gamma$.

To formulate our results more precisely, we need some notation and definitions.
A cocompact oriented Fuchsian group $\Gamma$
(and all Fuchsian groups in this paper will be assumed to be cocompact and oriented without further mention)
always admits a presentation of the following kind:
Consider non-negative integers $m$ and $g$ and integers $d_1,\ldots,d_m$
greater than or equal to $2$, such that
\begin{equation}
\label{Euler}
2-2g-\sum_{i=1}^m (1-d_i^{-1})
\end{equation}
is negative.
For some choice of $m$, $g$, and $d_i$, $\Gamma$ has a presentation
\begin{equation}
\label{presentation}
\begin{split}
\Gamma := \langle x_1,\ldots,x_m,y_1,\ldots,y_g,z_1,\ldots&,z_g\mid x_1^{d_1},\ldots,x_m^{d_m},\\
&x_1\cdots x_m [y_1,z_1]\cdots[y_g,z_g]\rangle,
\end{split}
\end{equation}
and its Euler characteristic $\chi(\Gamma)$ is given by (\ref{Euler}).
If $g=0$ in the presentation (\ref{presentation}), we sometimes denote $\Gamma$ by
$\Gamma_{d_1,\ldots,d_m}$.  If, in addition, $m=3$, $\Gamma$ is called a \emph{triangle group}, and its isomorphism class
does not depend on the order of the subscripts.
Note that the parameter $g$ and the multiset $\{d_1,\ldots,d_m\}$ are determined by the isomorphism class of $\Gamma$.
Every non-trivial element of $\Gamma$ of finite order is conjugate to a power of one of the $x_i$, which is an element of order exactly $d_i$.

\begin{defn}
Let $H$ be an almost simple algebraic group.
We say that a Fuchsian group $\Gamma$ is \emph{$H$-dense} if and only if there exists a homomorphism $\phi\colon \Gamma\to H(\bbR)$ such that $\phi(\Gamma)$ is Zariski-dense in $H$ and $\phi$ is injective on all finite cyclic subgroups of $\Gamma$ (equivalently,
$\phi(x_i)$ has order $d_i$ for all $i$).
\end{defn}

We can now state our main theorems.

\begin{thm}
\label{rep-thm}
For every Fuchsian group $\Gamma$ and every integer $n\ge 2$
$$\dim X^{\epi}_{\Gamma,\SU(n)} = (1-\chi(\Gamma)) \dim \SU(n) + O(1),$$
where the implicit constants depend only on $\Gamma$.
\end{thm}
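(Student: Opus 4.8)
The upper bound is the easy half: the relations in the presentation (\ref{presentation}) cut out $X^{\epi}_{\Gamma,\SU(n)}$ inside $\SU(n)^{m+2g}$. Each torsion relation $x_i^{d_i}=1$ forces $\phi(x_i)$ into a union of conjugacy classes, each of dimension at most $\dim\SU(n) - \rk\SU(n)$; the single long relation imposes $\dim\SU(n)$ further conditions, though one must be careful that these are not independent near reducible points—hence the restriction to the Zariski-dense locus, where $H^0$ and (by Poincaré duality) $H^2$ of the adjoint local system vanish, so the scheme is smooth of the expected dimension there. Summing, $\dim X^{\epi} \le (m+2g)\dim\SU(n) - m\,\rk\SU(n) - \dim\SU(n)$, and since $2-2g - \sum(1-d_i^{-1})$ differs from $2-2g-m$ by the bounded-in-absolute-value quantity $\sum d_i^{-1} \le m/2 = O(1)$, this is $(1-\chi(\Gamma))\dim\SU(n) + O(1)$. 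The genuinely substantive direction is the matching lower bound: one must exhibit a Zariski-dense representation whose local deformation space is as large as the cohomological Euler characteristic predicts. The strategy is to produce, for each $x_i$, an element of order exactly $d_i$ in $\SU(n)$ whose centralizer is as small as possible (generically a maximal torus, so that the conjugacy class of $\phi(x_i)$ has dimension $\dim\SU(n)-\rk\SU(n)$, up to an $O(1)$ correction forced by the arithmetic of $d_i$ versus $n$), then choose these elements and the $y_j, z_j$ so that the product relation $x_1\cdots x_m[y_1,z_1]\cdots[y_g,z_g]=1$ can be solved and the resulting $\phi$ has Zariski-dense image. Smoothness of $X^{\epi}$ at such a $\phi$, via the vanishing of $H^2(\Gamma,\su(n)_{\mathrm{Ad}\,\phi})$, then gives $\dim_\phi X^{\epi} = \dim Z^1(\Gamma,\su(n)) = \dim\SU(n) - \chi(\Gamma,\su(n))$, and $\chi(\Gamma,\su(n)) = \chi(\Gamma)\dim\SU(n)$ up to the torsion corrections, which are $O(1)$.

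For the existence of suitable torsion elements and the solvability of the product relation, the natural tool is a counting/probabilistic argument in the compact group $\SU(n)$ equipped with Haar measure, or equivalently a dimension count on the product variety: one fixes conjugacy classes $C_1,\ldots,C_m$ of the right orders and asks whether the map $\SU(n)^{2g} \times C_1 \times \cdots \times C_m \to \SU(n)$, $(y_j,z_j,w_i)\mapsto \prod w_i \prod[y_j,z_j]$, hits the identity with a fiber of the expected dimension and containing a Zariski-dense point. When $g\ge 1$ the commutator part already surjects onto $\SU(n)$ (the commutator map $\SU(n)^2\to\SU(n)$ is surjective), so one has enormous freedom and density is essentially automatic once one point exists; the delicate case is $g=0$ with $m$ small, i.e., the triangle groups and their near relatives, where one must check that a product of three (or few) prescribed-order conjugacy classes can equal the identity and generate a Zariski-dense subgroup. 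Here one can invoke the deformation-theoretic principle that if $\sum_i (\dim\SU(n) - \dim C_i) - (\dim\SU(n) - \rk\SU(n))$—the expected dimension of the solution variety modulo conjugation—is positive, then generic members of the classes work, after ruling out the finitely many small exceptions; this positivity is exactly where the hypothesis $\chi(\Gamma)<0$ (equation (\ref{Euler}) negative) enters, though for the smallest triangle groups the margin is only $O(1)$ and one may need $n$ large, absorbing the finitely many small $n$ into the $O(1)$.

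The main obstacle is precisely this last point: ensuring Zariski-density of the constructed representation uniformly in $n$, and handling the borderline Fuchsian groups—triangle groups $\Gamma_{d_1,d_2,d_3}$—where the expected dimension of the dense locus is barely positive and where, for special arithmetic relations between the $d_i$ and $n$, the relevant product-of-classes variety could fail to contain a dense point or could be empty. I expect the argument to split into the easy regime ($g\ge 1$, or $m$ large, or $\chi(\Gamma)$ bounded away from $0$) handled by a clean surjectivity-of-commutator-map plus deformation argument, and a finite case analysis (or a separate lemma on products of semisimple conjugacy classes in $\SU(n)$ generating the full group) for the triangle groups; the $O(1)$ error term is flexible enough to swallow all the arithmetic corrections (from $d_i \nmid n$, from centralizers larger than a torus, and from finitely many exceptional $n$), which is why the theorem is stated with an unspecified constant depending on $\Gamma$.
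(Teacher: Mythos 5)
Your proposal has the right general shape (Weil's deformation theory, nonsingularity from vanishing of $(\g^*)^\Gamma$, a separate treatment of density), but both halves have genuine gaps. For the upper bound, your count gives $\dim X^{\epi}\le (m+2g-1)\dim\SU(n)-m\,\rk\SU(n)$, and you then argue this equals $(1-\chi(\Gamma))\dim\SU(n)+O(1)$ because $\sum d_i^{-1}=O(1)$. But $(1-\chi(\Gamma))\dim\SU(n)=(m+2g-1)\dim\SU(n)-\bigl(\sum_i d_i^{-1}\bigr)\dim\SU(n)$, so the discrepancy between your bound and the target is $\bigl(\sum_i d_i^{-1}\bigr)\dim\SU(n)-m\,\rk\SU(n)$, which grows like $n^2$: a bounded coefficient times $\dim\SU(n)$ is not a bounded error. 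Relatedly, an element of order $d$ in $\SU(n)$ has at most $d$ distinct eigenvalues, so by Cauchy--Schwarz on the multiplicities its centralizer has dimension at least $n^2/d-1$; its conjugacy class therefore has dimension at most $\dim\SU(n)-n^2/d+1$, never $\dim\SU(n)-\rk\SU(n)$. This sharper centralizer bound, special to type A, is exactly what produces the $O(1)$ error term (for general $G$ one only gets $O(\rank G)$, as in Theorems \ref{sl2-thm} and \ref{so3-thm}), and it is the estimate your upper bound is missing. The same confusion affects your lower bound: the torsion images cannot have centralizer a maximal torus; what is needed are classes with balanced eigenvalue multiplicities $n/d_i+O(1)$.

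The more serious gap is the existence step when $g=0$. You appeal to a principle that positivity of the expected dimension of $\{(w_1,\dots,w_m)\in C_1\times\cdots\times C_m : w_1\cdots w_m=1\}$ forces this variety to be nonempty and to contain a point generating a Zariski-dense subgroup; no such principle holds in general (this is a Deligne--Simpson-type problem, and a product of prescribed conjugacy classes can fail to contain the identity even when the dimension count is favorable). The paper avoids this entirely by importing a theorem of Liebeck and Shalev producing surjections $\Gamma\to\AG_{n+1}$ with each $x_i$ mapping to a permutation built from $d_i$-cycles, then embedding $\AG_{n+1}\subset\SO(n)\subset\SU(n)$; irreducibility of $\so(n)$ and of $\su(n)$ under $\AG_{n+1}$ and $\SO(n)$ respectively gives nonsingularity, and the passage from density in $\SO(n)$ to density in $\SU(n)$ is handled by showing that $\SO(n)$ lies in a unique maximal closed subgroup of $\SU(n)$ and applying the density criterion (Theorem \ref{Density}). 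Without a concrete existence input of this kind, your construction of $\rho_0$ does not get off the ground.
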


In particular, this answers a question of Igor Dolgachev, proving the existence in sufficiently high degree,
of uncountably many absolutely irreducible, pairwise non-conjugate, representations.

\begin{thm}
\label{sl2-thm}
For every Fuchsian group $\Gamma$  and every split simple real algebraic group $G$,
$$
\dim X^{\epi}_{\Gamma,G} = (1-\chi(\Gamma))\dim G + O(\rank G),
$$
where the implicit constants depend only on $\Gamma$.
\end{thm}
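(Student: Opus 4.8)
The plan is to establish matching upper and lower bounds for $\dim X^{\epi}_{\Gamma,G}$, each with error $O(\rank G)$, the constants depending on $g$, $m$, $d_1,\dots,d_m$, i.e.\ on $\Gamma$. For the upper bound, at a Zariski-dense $\rho\colon\Gamma\to G(\bbR)$ Weil's description of the Zariski tangent space gives $\dim_\rho X^{\epi}_{\Gamma,G}\le\dim_\rho\Hom(\Gamma,G)\le\dim Z^1(\Gamma,\g_\rho)$. Since $\rho(\Gamma)$ is Zariski-dense in the simple group $G$, the $\Gamma$-module $\g_\rho$ is semisimple without invariants, so $H^0(\Gamma,\g_\rho)=0$; restricting to a torsion-free finite-index (surface) subgroup, invoking Poincaré duality there, and using that $\g$ is self-dual via the Killing form shows $H^2(\Gamma,\g_\rho)=0$ as well. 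The Euler-characteristic formula for the Fuchsian group $\Gamma$ then yields
\[
\dim Z^1(\Gamma,\g_\rho)=(2g-1)\dim\g+\sum_{i=1}^m\bigl(\dim\g-\dim\g^{\rho(x_i)}\bigr).
\]
Comparing with $(1-\chi(\Gamma))\dim G=(2g-1)\dim G+\sum_i(1-d_i^{-1})\dim G$, it remains to show that any $t\in G$ of finite order $d$ satisfies $\dim\g^t\ge d^{-1}\dim\g-O(\rank G)$; this is proved uniformly in $G$, e.g.\ through Kac coordinates (bounding the number of roots whose $t$-weight is divisible by $d$) or by a short case analysis. This gives $\dim X^{\epi}_{\Gamma,G}\le(1-\chi(\Gamma))\dim G+O(\rank G)$.

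For the lower bound I would deform the principal representation. As $\Gamma$ is a cocompact Fuchsian group it embeds as a lattice in $\PSL_2(\bbR)$ with $x_i$ of order $d_i$, so by Borel density there is a Zariski-dense $\rho_1\colon\Gamma\to\SL_2$ (or $\PGL_2$, according to the isomorphism type of the principal subgroup of $G$) with $\rho_1(x_i)$ of order $d_i$; composing with the principal homomorphism $\SL_2\to G$, which exists because $G$ is split, gives $\rho_0\colon\Gamma\to G$ with image the principal subgroup $\bar S$. As a module for the principal $\sl_2$ one has $\g=\bigoplus_{j=1}^{r}V_{2e_j}$, where $e_1,\dots,e_r$ are the exponents of $G$, $r=\rank G$, and $V_k=\Sym^k$ of the standard $\sl_2$-module; each $V_{2e_j}$ is a nontrivial self-dual $\rho_1$-module, so $H^0(\Gamma,V_{2e_j})=H^2(\Gamma,V_{2e_j})=0$, and the Euler-characteristic formula gives $\dim H^1(\Gamma,V_{2e})=(-\chi(\Gamma))(2e+1)+O(1)$, the $O(1)$ coming from the fractional parts $\{e/d_i\}$. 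Summing over the $r$ exponents, $\dim H^1(\Gamma,\g_{\rho_0})=(-\chi(\Gamma))\dim G+O(\rank G)$ and $H^2(\Gamma,\g_{\rho_0})=0$, so $\Hom(\Gamma,G)$ is \emph{smooth} at $\rho_0$ of dimension $\dim Z^1(\Gamma,\g_{\rho_0})=(1-\chi(\Gamma))\dim G+O(\rank G)$; let $Z$ be the irreducible component through $\rho_0$.

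It remains to show the generic point of $Z$ is Zariski-dense in $G$, for then $Z\subseteq X^{\epi}_{\Gamma,G}$ and the lower bound follows. Since $G$ has finitely many conjugacy classes of maximal proper closed subgroups, the non-Zariski-dense locus lies in a finite union $\bigcup_H\overline{G\cdot\Hom(\Gamma,H)}$, and by irreducibility of $Z$ it suffices to prove $Z\not\subseteq\overline{G\cdot\Hom(\Gamma,H)}$ for each $H$. One may discard every $H$ containing no conjugate of $\bar S$ (then $\rho_0\notin\overline{G\cdot\Hom(\Gamma,H)}$) and every parabolic $H$, since $\bar S$ contains a regular unipotent element of $G$ and hence lies in no proper parabolic; the surviving $H$ are reductive and, containing a regular unipotent of $G$, appear on the Saxl--Seitz/Testerman classification list, where for $\dim G$ large every proper such $H$ has $\dim\g/\h$ at least a fixed positive fraction of $\dim\g$. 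Then $\g/\h$ is a sum of $\sl_2$-modules $V_{2e}$ with $\max e\to\infty$ as $\dim G\to\infty$, so $H^1(\Gamma,\g/\h)\ne 0$; hence the subspace $Z^1(\Gamma,\h)+B^1(\Gamma,\g)$ of infinitesimal deformations of $\rho_0$ keeping the image conjugate into $H$ is \emph{proper} in $T_{\rho_0}\Hom(\Gamma,G)=Z^1(\Gamma,\g_{\rho_0})$, and smoothness of $\Hom(\Gamma,G)$ at $\rho_0$ then forces $Z\not\subseteq\overline{G\cdot\Hom(\Gamma,H)}$. The finitely many $G$ with $\dim G$ bounded in terms of $\Gamma$ are absorbed into the constant.

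The main obstacle is this last step — showing, uniformly in $G$, that a generic deformation of the principal representation is Zariski-dense. It is genuinely needed even for $g=0$: triangle groups are \emph{not} rigid once $\dim G$ is large (there $(1-\chi(\Gamma))\dim G>\dim G$), so one must produce honest positive-dimensional families of Zariski-dense representations, not merely count a finite set of rigid ones. A naive dimension count does not suffice, since it fails to exclude deformations of $\rho_0$ from collapsing into a large reductive subgroup such as $\SO_n\subset\SL_n$; only the tangent-space computation $H^1(\Gamma,\g/\h)\ne 0$, combined with smoothness of $\Hom(\Gamma,G)$ at $\rho_0$ and the classification of reductive subgroups containing a regular unipotent, closes the gap, and rendering this precise with Zariski closures requires care (one may alternatively appeal to known irreducibility and density results for surface-group representation varieties). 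Two further, essentially classical, ingredients are the centralizer estimate $\dim\g^t\ge d^{-1}\dim\g-O(\rank G)$ used in the upper bound and the rank-one input — the existence of the Zariski-dense $\rho_1$ with the prescribed torsion, together with the $\SL_2$-versus-$\PGL_2$ lifting point.
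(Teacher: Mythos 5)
Your upper bound is essentially the paper's (Weil's formula plus the centralizer estimate $\dim\g^{t}\ge d^{-1}\dim\g-O(\rank G)$, which the paper takes from Lawther), so I will concentrate on the lower bound, where you diverge from the paper and where there is a genuine gap. The paper uses the principal homomorphism only for types $B$ and $C$, precisely because only there (Proposition~\ref{dynkin}) is the principal $\PGL_2$ a \emph{maximal} subgroup; for types $A$ and $D$ it instead builds $\rho_0$ by the alternating-group method of \S 4, landing \emph{densely} in $\SO(n)\subset\SL_n$, resp.\ $\SO(n)\times\SO(n)\subset\SO(n,n)$. You propose to use the principal homomorphism uniformly and to rule out collapse into an intermediate reductive $H$ by the observation $H^1(\Gamma,\g/\h)\ne 0$. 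Two problems. First, that step is logically incomplete: the locus of representations conjugate into $H$ is the image of $G\times\Hom(\Gamma,H)$ under conjugation, and the dimension of the closure of that image is controlled by its generic fibre and by $\dim\Hom(\Gamma,H)$ globally, not by the rank of the differential at the single point $(e,\rho_0)$; properness of $Z^1(\Gamma,\h)+B^1(\Gamma,\g)$ in $Z^1(\Gamma,\g)$ gives a \emph{lower} bound on that rank, which is the wrong direction, and the closure of an image can have a larger tangent space at $\rho_0$ than the image of the differential there. What one can actually prove is the dimension count $\dim\overline{G\cdot C}\le\dim G-\dim H+\dim C$ combined with Proposition~\ref{p:uph1} applied to $H$ --- this is condition (1) of Theorem~\ref{Density}.

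Second, and decisively, your claim that every reductive maximal overgroup of the principal subgroup has $\dim\g/\h$ a fixed positive fraction of $\dim\g$ fails in type $D$: the principal $\PGL_2$ of a split group of type $D_n$ lies in a copy of (split) $\SO_{2n-1}$, of codimension only $2n-1=O(\rank G)$. There the honest dimension count gives $\dim\overline{G\cdot\Hom(\Gamma,H)}\le\dim G+(-\chi(\Gamma))\dim H+O(\rank G)$ against $\dim Z=\dim G+(-\chi(\Gamma))\dim G+O(\rank G)$, and the margin $(-\chi(\Gamma))(2n-1)-O(n)$ is negative for triangle groups with $|\chi(\Gamma)|$ small (e.g.\ $\Gamma_{2,3,7}$). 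Moreover $G(\bbR)/H(\bbR)$ is not compact for these split forms, so the compactness escape (conditions (2)--(3) of Theorem~\ref{Density}) is unavailable, and condition (4) fails as well, since the principal subgroup fixes a vector in the standard representation of $D_n$. So nothing in your argument excludes the possibility that the entire component through the principal $\rho_0$ collapses into conjugates of $\SO_{2n-1}$; this is exactly why the paper switches to the alternating-group construction, which produces a $\rho_0$ already dense in $\SO(n)\times\SO(n)\subset\SO(n,n)$, for which the containing maximal subgroups can be listed and dispatched. For type $A$ your route can be repaired (the reductive overgroups $\Sp_n$, $\SO_n$ of the principal subgroup have roughly half the dimension of $\SL_n$, so the dimension count closes for $\dim G$ large), and for types $B$ and $C$ it coincides with the paper's; but for type $D$ you need a different starting representation.
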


\begin{thm}
\label{so3-thm}
For every $\SO(3)$-dense Fuchsian group $\Gamma$ and every compact simple real algebraic group $G$,
$$\dim X^{\epi}_{\Gamma,G} = (1-\chi(\Gamma))\dim G + O(\rank G),$$
where the implicit constants depend only on $\Gamma$.
\end{thm}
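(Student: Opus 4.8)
The plan is as follows. The asserted estimate is vacuous once $\dim G$ is bounded in terms of $\Gamma$, so we may assume $\rank G$ is as large as we please relative to $\Gamma$; a routine reduction (the adjoint group $G/Z(G)$ has the same dimension and rank as $G$, and $G\to G/Z(G)$ is \'etale) lets us assume $G$ is adjoint. Fix an $\SO(3)$-dense homomorphism $\phi_0\colon\Gamma\to\SO(3)$ and let $\iota\colon\SO(3)\hookrightarrow G$ be a principal embedding — the principal $\SU(2)$ of $G$ acts on $\g$ through $\PSU(2)=\SO(3)$ because $\g\cong\bigoplus_e V_{2e}$ as an $\SU(2)$-module with $e$ running over the (positive) exponents of $\g$, so $-1$ acts trivially. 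Put $\psi_0:=\iota\circ\phi_0$ and $S_0:=\iota(\SO(3))$, so that $\psi_0(\Gamma)$ is Zariski-dense in $S_0$. Since $S_0$ is principal and $G$ simple, $Z_G(S_0)$ is finite, so $\g^{S_0}=0$ and hence $\g^{\psi_0(\Gamma)}=0$; and since $G$ is compact, every closed subgroup of $G$ is reductive and $\g$ carries an $\Ad(G)$-invariant inner product.

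First I would show that $\psi_0$ is a smooth point of $X_{\Gamma,G}=\Hom(\Gamma,G)$ of local dimension $(1-\chi(\Gamma))\dim G+O(\rank G)$. As $\Gamma$ is virtually an orientable surface group, Poincar\'e duality over $\bbR$ gives $H^2(\Gamma;\g)\cong H^0(\Gamma;\g)^*=(\g^{\psi_0(\Gamma)})^*=0$; since $H^2(\Gamma;\g)$ is the obstruction space, $\psi_0$ is smooth. Because $G$ is compact, near $\psi_0$ the scheme $\Hom(\Gamma,G)$ is the fibre over $1$ of the word map $\nu\colon C_1\times\dots\times C_m\times G^{2g}\to G$, $\nu(a_1,\dots,b_g,c_g)=a_1\cdots a_m[b_1,c_1]\cdots[b_g,c_g]$, where $C_i$ is the conjugacy class of $\psi_0(x_i)$, an element of order exactly $d_i$; as $\coker(d\nu_{\psi_0})\cong(\g^{\psi_0(\Gamma)})^*=0$,
\[
\dim_{\psi_0}\Hom(\Gamma,G)=\textstyle\sum_i\dim C_i+(2g-1)\dim G=(2g-1+m)\dim G-\sum_i\dim Z_G(\psi_0(x_i)).
\]
Writing $\psi_0(x_i)=\exp(2\pi i\,k_i\rho^\vee/d_i)$ with $\gcd(k_i,d_i)=1$, $\Ad\psi_0(x_i)$ acts on the root space $\g_\alpha$ by $e^{2\pi i k_i\mathrm{ht}(\alpha)/d_i}$, so $\dim Z_G(\psi_0(x_i))=\rank G+2\#\{\alpha\in\Phi^+:d_i\mid\mathrm{ht}(\alpha)\}$. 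The number of positive roots of height $j$ equals $p_j=\#\{\text{exponents}\ge j\}$, a non-increasing sequence with $p_1=\rank G$ and $\sum_j p_j=|\Phi^+|$, so Abel summation gives $|\sum_{d\mid j}p_j-\frac1d\sum_j p_j|\le 2\rank G$, whence $\dim Z_G(\psi_0(x_i))=\dim G/d_i+O(\rank G)$. Summing over $i$ and using $2g-1+m-\sum 1/d_i=1-\chi(\Gamma)$ yields the claim, with implied constant depending only on $m$ and $\chi(\Gamma)$.

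Next I would show that the irreducible component $Y$ of $\Hom(\Gamma,G)$ through $\psi_0$ meets the Zariski-dense locus, hence $Y\subseteq X^{\epi}_{\Gamma,G}$. If not, then (the non-dense locus being a countable union of sets $G\cdot\Hom(\Gamma,H)$ and $\bbR$ being uncountable) $Y\subseteq G\cdot\Hom(\Gamma,H_0)$ for a single proper closed $H_0$; conjugating, $H_0\supseteq\overline{\psi_0(\Gamma)}=S_0$. Split $\g=\h_0\oplus\mathfrak m$ as $\Gamma$-modules via the invariant form; then $\mathfrak m^{S_0}=0$, $\dim\mathfrak m=\dim G-\dim H_0\ge\rank G$ (a proper closed subgroup of a compact simple group of large rank has codimension at least $\rank G$), and $\mathfrak m$ is a sum of modules $V_{2e}$ ($e$ a positive exponent of $\g$) of total multiplicity $N\le 2\sqrt{\dim\mathfrak m}$, since these constituents occur in $\g$ with multiplicity $\le 2$ and distinct exponents grow at least linearly. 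Realizing $G\cdot\Hom(\Gamma,H_0)$ as the image under the proper first projection of the incidence variety $\{(\psi,x)\in\Hom(\Gamma,G)\times G/H_0:\psi(\Gamma)\subseteq\Stab_G(x)\}$, whose fibres over $G/H_0$ are copies of $\Hom(\Gamma,H_0)$, and using that $\psi_0$ is also a smooth point of $\Hom(\Gamma,H_0)$ (again $Z_{H_0}(S_0)$ is finite, so $H^2(\Gamma;\h_0)=0$), one gets
\[
\dim_{\psi_0}\bigl(G\cdot\Hom(\Gamma,H_0)\bigr)\le\dim(G/H_0)+\dim_{\psi_0}\Hom(\Gamma,H_0)=\dim\mathfrak m+\dim Z^1(\Gamma;\h_0).
\]
On the other hand $\dim Y=\dim Z^1(\Gamma;\g)=\dim Z^1(\Gamma;\h_0)+\dim\mathfrak m+\dim H^1(\Gamma;\mathfrak m)$ (from $\g=\h_0\oplus\mathfrak m$ and $\mathfrak m^\Gamma=\mathfrak m_\Gamma=0$), and the standard cohomology computation for Fuchsian groups together with $\dim\mathfrak m^{\psi_0(x_i)}\le\dim\mathfrak m/d_i+N$ gives $\dim H^1(\Gamma;\mathfrak m)=(2g-2+m)\dim\mathfrak m-\sum_i\dim\mathfrak m^{\psi_0(x_i)}\ge|\chi(\Gamma)|\dim\mathfrak m-2m\sqrt{\dim\mathfrak m}>0$ once $\dim\mathfrak m$ exceeds a constant depending only on $\Gamma$ — which holds because $\dim\mathfrak m\ge\rank G$. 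Hence $\dim Y>\dim_{\psi_0}(G\cdot\Hom(\Gamma,H_0))$, a contradiction, and so $\dim X^{\epi}_{\Gamma,G}\ge\dim Y=(1-\chi(\Gamma))\dim G+O(\rank G)$.

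For the reverse inequality — which holds for every simple real algebraic group and is presumably already in hand — the same local analysis applies: any Zariski-dense $\psi\colon\Gamma\to G$ has $\g^{\psi(\Gamma)}=0$, so $H^2(\Gamma;\g)=0$ and $\dim_\psi\Hom(\Gamma,G)=\dim Z^1(\Gamma;\g)=(2g-1+m)\dim G-\sum_i\dim Z_G(\psi(x_i))$, while $\psi(x_i)$ has order dividing $d_i$ and the eigenvalue $1$ of $\Ad t$ for $t$ of order $e$ has multiplicity at least $\dim G/e-O(\rank G)$, so $\dim Z_G(\psi(x_i))\ge\dim G/d_i-O(\rank G)$; taking the supremum over the dense set of Zariski-dense points of $X^{\epi}_{\Gamma,G}$ bounds its dimension. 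The main obstacle is the density argument of the third paragraph — the uniform positivity of $\dim H^1(\Gamma;\mathfrak m)$ over all proper closed $H_0\supseteq S_0$, which rests on the bounded $S_0$-multiplicities in $\g$ and on $\dim\mathfrak m\ge\rank G$ — and it is precisely here that the hypotheses that $\Gamma$ be $\SO(3)$-dense (to produce $\psi_0$) and that $G$ be compact (so that $H_0$ is reductive and $\g$ splits as a $\Gamma$-module) are used.
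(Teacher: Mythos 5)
Your route is genuinely different from the paper's and, I think, more unified. The paper proves this theorem by patching together two methods: for types $A$, $B$, $D$ it uses the alternating-group method (Liebeck--Shalev epimorphisms $\Gamma\to\AG_{n+1}\subset\SO(n)$, then deformation into $\SU(n)$ or $\SO(n)\times\SO(n)\subset\SO(n,n)$), and it reserves the principal $\SO(3)$ for the symplectic case only --- precisely because Dynkin's maximality theorem for the principal three-dimensional subgroup (Proposition~\ref{dynkin}) fails for types $A_n$ ($n\ge 3$), $D_n$ and $E_6$, so Corollary~\ref{EasyDensity} is unavailable there. You use the principal $\SO(3)$ uniformly and compensate for the failure of maximality by verifying, for \emph{every} maximal $H_0\supseteq S_0$, the inequality $t_G-\dim G>t_{H_0}-\dim H_0$ (your $\dim H^1(\Gamma,\mathfrak{m})>0$), exploiting that $\mathfrak{m}$ decomposes into $O(\sqrt{\dim\mathfrak{m}})$ irreducible $\SO(3)$-constituents while $\dim\mathfrak{m}\gtrsim\rank G$. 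That multiplicity estimate is the genuinely new ingredient relative to the paper, and your local computation of $\dim Z^1(\Gamma,\g)$ via root heights agrees exactly with the paper's count $\sum_i(1+2\lfloor e_i/d_j\rfloor)$ in Proposition~\ref{Adjoint}. (You should still justify the codimension bound $\dim\mathfrak{m}\ge c\,\rank G$ for proper closed subgroups of a compact simple group, and note that for $G$ of bounded rank the theorem is not literally vacuous --- one still needs $X^{\epi}_{\Gamma,G}\ne\emptyset$, which is what Proposition~\ref{Exceptional} supplies for the exceptional groups; but the paper makes the same large-rank reduction.)

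The one step that does not close as written is the passage from ``$Y$ is not contained in $X^{\epi}_{\Gamma,G}$'' to ``$Y\subseteq G\cdot\Hom(\Gamma,H_0)$ for a single $H_0$ which, after conjugation, contains $S_0$, and the relevant component of $\Hom(\Gamma,H_0)$ is the one through $\psi_0$.'' Your Baire argument produces a nonempty open subset of $Y(\bbR)^{\mathrm{sm}}$ lying in one set $G(\bbR)\cdot C(\bbR)$, where $C$ is an irreducible component of some $\Hom(\Gamma,H_i)$; but that open set may lie in a different connected component of $Y(\bbR)^{\mathrm{sm}}$ than $\psi_0$, and passing to Zariski closures only gives $Y\subseteq\overline{G\cdot C}$, which does not force $\psi_0$ (or any conjugate of it) into $C$. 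If $C$ contains no conjugate of $\psi_0$, your bound $\dim_{\psi_0}\Hom(\Gamma,H_0)=\dim Z^1(\Gamma,\mathfrak{h}_0)$ says nothing about $\dim C$, and the dimension comparison collapses. This is exactly the case distinction that Theorem~\ref{Density} makes between its conditions (2) and (3), and the fix is the paper's Proposition~\ref{openness}: since $G(\bbR)/H_i(\bbR)$ is compact, $G(\bbR)\cdot C(\bbR)$ is closed in the real topology, so one localizes the whole Baire argument to a small real neighborhood $U$ of $\psi_0$; components $C$ not containing a conjugate of $\psi_0$ then miss $U$ entirely, and for the remaining ones the tangent-space bound at $\psi_0$ applies and your $H^1(\Gamma,\mathfrak{m})>0$ estimate finishes the job. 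With that repair (which uses only the compactness you already invoke for the incidence variety), your argument goes through.
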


Let us mention here that all but finitely many Fuchsian groups are $\SO(3)$-dense (see  Proposition \ref{base-case} for the complete list of exceptions).

In the special case that $\Gamma$ is a surface group and $\dim G$ is large compared to the genus, a recent result of Kim and Pansu \cite{KP} implies the above dimension estimates without error terms.  In the generality in which we work, it is certainly necessary to have some error term (since $\chi(\Gamma)$ may not be integral), but ours may not be the best possible.

The proofs of our theorems are based on deformation theory.  It is a well-known result of Weil
\cite{Weil} that the Zariski tangent space to $X_{\Gamma, G}$ at any point
$\rho\in  X_{\Gamma,G}(\bbR)$ is equal to the space of 1-cocycles
$Z^1(\Gamma,\Ad\compose\rho)$, where $\Ad\compose \rho$ is the representation of $\Gamma$ on
the Lie algebra $\g$ of $G$ determined by $\rho$.  (For brevity,
we often denote $\Ad\compose \rho$ by
$\g$, where the action of $\Gamma$ is understood.)
In general, the dimension of the tangent space to $X_{\Gamma,G}$ at $\rho$
can be strictly larger than the dimension of a component of $X_{\Gamma,G}$ containing $\rho$,
thanks to obstructions in $H^2(\Gamma,\Ad\compose \rho)$.
Weil showed that if the coadjoint representation $(\Ad\compose \rho)^*$ has no $\Gamma$-invariant vectors,
then $\rho$ is a non-singular point of $X_{\Gamma,G}$, i.e., it lies on a unique component of $X_{\Gamma,G}$
whose dimension is given by
$\dim Z^1(\Gamma,\Ad\compose \rho)$, the dimension of the Zariski-tangent space to $X_{\Gamma,G}$ at $\rho$.
Computing this dimension is easy; the difficulty
is to find $\rho$ for which the obstruction space vanishes.  A basic technique is to
find a subgroup $H$ of $G$ for which the homomorphisms $\Gamma\to H$ are better understood
and to choose $\rho$ to factor through $H$.  In this paper, we make particular use of the
homomorphisms from $H = \AG_n$ to $G = \SO(n-1)$ and of the
principal homomorphisms from $H = \PGL(2)$ and $H = \SO(3)$ to various groups $G$---see \S3 and \S4 respectively.

It is interesting to compare our results (Theorems \ref{rep-thm}--\ref{so3-thm})
to the results of Liebeck and Shalev \cite{LS2}.  They also estimate
$\dim  X_{\Gamma, G}$ (and implicitly $\dim X_{\Gamma,G}^{\epi}$), but their methods work only for genus $g\ge 2$,
while the most difficult (and interesting) case is $g=0$.  On the other hand, their methods work in arbitrary characteristic, while the methods of this paper appear to break down when the characteristic of the field divides the order of some generator $x_i$.
A striking difference is that they deduce their information about $X_{\Gamma, G}$
from deep results on the finite quotients  of $\Gamma$, while we work directly with $X_{\Gamma, G}^{\epi}$
and can deduce that various families of finite groups of Lie type can be realized as quotients of $\Gamma$ (see \cite{LLM}).

It may also be worth comparing our results to those of Benyash-Krivatz, Chernousov, and Rapinchuk
\cite{BCR}, who consider $X_{\Gamma,\SL_n}$ where $\Gamma$ is a surface group.  They not only
compute the dimension but prove a strong rationality result.  It would be interesting to know if similar rationality results hold for more general semisimple groups $G$. \\

The paper is organised as follows. In \S \ref{s:two}, we give a uniform proof of the
upper bound in Theorems \ref{rep-thm}, \ref{sl2-thm} and \ref{so3-thm}. This requires estimating the dimensions of suitable cohomology groups and boils down to finding
lower bounds on dimensions of centralizers.

To prove the lower bounds of these three theorems, we  present in each case a representation of $\Gamma$
which is ``good'' in the sense that it is a non-singular point of the representation variety to which it belongs.
We then compute the dimension of the tangent  space at the good point.
In \S \ref{s:three}, we explain how one can go from a good representation of $\Gamma$ into a smaller group $H$ to a good representation into a larger group $G$.
The initial step of this kind of induction is via a representation of $\Gamma$ into an alternating group, $\SO(3)$, or $\PGL_2(\bbR)$.
We discuss the alternating group strategy in \S3, where we prove Theorem \ref{rep-thm} and begin the proof of Theorem~\ref{sl2-thm}.
In \S4, we discuss the principal homomorphism strategy, treating the remaining cases of Theorem~\ref{sl2-thm}, proving Theorem~\ref{so3-thm}, and proving the existence of dense homomorphisms
from $\SO(3)$-dense Fuchsian groups to  exceptional compact Lie groups (Proposition~\ref{Exceptional}).

Proposition \ref{base-case} in \S5 shows that there are only
six Fuchsian groups which are not $\SO(3)$-dense.   We do not have a good strategy for finding dense homomorphisms from
these groups to compact simple Lie groups, since the methods of \S3 are not effective.  Y. William Yu found explicit surjective homomorphisms, described in the Appendix, from these groups
to small alternating groups, which may serve as base cases for inductively constructing  dense homomorphisms $\Gamma\to G(\bbR)$ for these groups.
We are grateful to him for his help.

All \emph{Fuchsian groups} in this paper are assumed to be cocompact and oriented.
A \emph{variety} is an affine scheme of finite type over $\bbR$.
Its \emph{dimension} is understood to mean its Krull dimension.
\emph{Points} are $\bbR$-points, and \emph{non-singular} points should be understood scheme-theoretically;
i.e., a point $x$ is non-singular if and only if it lies in only one irreducible component $X$,
and the dimension of $X$ equals the dimension of the Zariski-tangent space at $x$.
An \emph{algebraic group} will mean a linear algebraic group over $\bbR$.
Unless otherwise stated, all topological notions will be understood in the sense of
the Zariski-topology.  In particular, a \emph{closed subgroup} is taken to be Zariski-closed.
Note, however, that an algebraic group $G$ is  \emph{compact} if $G(\bbR)$ is so in the real topology.

We would like to thank the referee for a quick and thorough reading of our paper and a number of very helpful comments.

This paper is dedicated to the memory of Leon Ehrenpreis who was a leading figure in Fuchsian groups and was an inspiration in several other directions---not only mathematically.

\section{Upper Bounds}\label{s:two}

We recall some results from \cite{Weil}.  For every finitely generated group $\Gamma$, the Zariski tangent space to $\rho \in X_{\Gamma, G}(\bbR)$ is equal to $Z^1(\Gamma, {\rm Ad}\circ \rho)$ where ${\rm Ad}:  G \rightarrow {\rm Aut}(\mathfrak{g})$ is the adjoint representation of $ G$ on its Lie algebra.  We will often write this more briefly as $Z^1(\Gamma,\g)$.
Note that $\dim Z^1(\Gamma,\g)$ is always at least as great as the dimension of any component of $X_{\Gamma,G}$ in which $\rho$ lies.
Moreover, if $\Gamma$ is a Fuchsian group and  the coadjoint representation $\g^* = ({\rm Ad}\circ \rho)^* $ has no $\Gamma$-invariant vectors, then $\rho$ is a non-singular point of $X_{\Gamma,G}$.

If $V$ denotes any finite dimensional  real vector space $V$ on which $\Gamma$ acts,
then
\begin{equation}
\label{e-dim}
\begin{split}
\dim Z^1(\Gamma&,V) := (2g-1)\dim V + \dim (V^*)^\Gamma + \sum_{j=1}^m (\dim V  - \dim V^{\langle x_j\rangle}).\\
          &= (1-\chi(\Gamma))\dim V + \dim (V^*)^\Gamma + \sum_{j=1}^m \Bigl(\frac{\dim V}{d_j} - \dim V^{\langle x_j\rangle}\Bigr).
\end{split}
\end{equation}

The following proposition essentially gives the upper bounds in Theorems \ref{rep-thm}, \ref{sl2-thm} and  \ref{so3-thm}, since for every irreducible component $C$ of $X_{\Gamma,G}^{\epi}$
there exists a representation
$\rho\colon \Gamma\to G(\bbR)$ with Zariski-dense image in $C(\bbR)$; $\dim Z^1(\Gamma,\g)$ is at least as great as the dimension of any irreducible component of $X_{\Gamma,G}$
to which $\rho$ belongs and therefore at least as great as $\dim C$.

\begin{prop}\label{p:uph1}
For every Fuchsian group $\Gamma$, every reductive $\mathbb{R}$-algebraic group $ G$  with a Lie algebra $\mathfrak{g}$ and every representation $\rho: \Gamma \rightarrow G(\mathbb{R})$ with Zariski dense image, we have:
$$ \dim Z^1(\Gamma,\g)\leq (1-\chi(\Gamma)) \dim  G+(2g+m+\rank G)+\frac32 m \rank G,$$
where $g$ and $m$ are as in (\ref{presentation}).
\end{prop}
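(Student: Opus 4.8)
The plan is to plug $V = \g$ into the dimension formula \eqref{e-dim} and bound the two "error" terms: $\dim(\g^*)^\Gamma$ and the sum $\sum_{j=1}^m\bigl(\frac{\dim\g}{d_j} - \dim\g^{\langle x_j\rangle}\bigr)$. Since $G$ is reductive, the adjoint representation is self-dual (the Killing form, extended by a nondegenerate invariant form on the center, gives $\g\cong\g^*$ as $\Gamma$-modules), so $\dim(\g^*)^\Gamma = \dim\g^\Gamma$. Because $\rho$ has Zariski-dense image, $\g^\Gamma = \g^G = \mathfrak{z}(\g)$, the center of $\g$; as $G$ is reductive with $\g$ of rank $\rank G$, we have $\dim\mathfrak{z}(\g)\le \rank G$. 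So the first error term contributes at most $\rank G$, which lands inside the allowed $(2g+m+\rank G)$.

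The main work is the sum over the torsion generators. For each $j$, I need a lower bound on $\dim\g^{\langle x_j\rangle}$, i.e., on the dimension of the centralizer $\mathfrak{z}_\g(\rho(x_j))$ of the finite-order element $\rho(x_j)\in G(\bbR)$. The key point is that the centralizer of any element of a reductive group contains a maximal torus' worth of room — more precisely, for $s\in G$ semisimple (and $\rho(x_j)$ has finite order, hence is semisimple), the connected centralizer $Z_G(s)^\circ$ is reductive of the same rank as $G$, so $\dim\mathfrak{z}_\g(\rho(x_j)) = \dim Z_G(s)\ge \rank G$. That already gives each summand an upper bound of $\frac{\dim\g}{d_j} - \rank G \le \frac{\dim\g}{2} - \rank G < \frac{\dim\g}{2}$; but I need something sharp enough to land in the $\frac32 m\rank G$ term. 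I would argue as follows: writing $\g\otimes\bbC = \h\oplus\bigoplus_\alpha \g_\alpha$ for a maximal torus containing $s$, the element $s$ acts on each root space $\g_\alpha$ by a root of unity, and acts trivially on $\h$; the non-fixed part of $\g$ therefore comes only from root spaces, of which there are $\dim\g - \rank G$ in total, coming in $\pm\alpha$ pairs. On such a pair $\{\g_\alpha,\g_{-\alpha}\}$, $s$ acts by $\zeta$ and $\zeta^{-1}$; if $\zeta\ne 1$ this contributes $0$ to $\g^{\langle x_j\rangle}\otimes\bbC$ but $2$ to $\dim\g$, whereas if $\zeta=1$ it contributes $2$ to both. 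In the worst case every root pair is moved, giving $\dim\g^{\langle x_j\rangle}=\rank G$ and the bound above; but one checks the contribution of the $j$-th term is at most $\tfrac{\dim\g}{d_j}-\rank G$. Summing, $\sum_j(\tfrac{\dim\g}{d_j}-\rank G)$ — here I note $\sum_j \tfrac1{d_j}\le \tfrac m2$ but this is too weak; instead I bound more crudely, $\tfrac{\dim\g}{d_j}\le \tfrac{\dim\g}{2}$, and absorb $\dim\g$ against $\rank G$ using $\dim\g\le (\rank G)^2 + \text{(something)}$... Actually the clean route is: $\tfrac{\dim\g}{d_j} - \dim\g^{\langle x_j\rangle} \le \tfrac{\dim\g}{2} - \rank G$, and separately each root pair moved by $s$ removes $2$ from $\dim\g^{\langle x_j\rangle}$ relative to $\dim\g$, but the total number of such pairs is at most $(\dim\g - \rank G)/2$, giving exactly $\dim\g^{\langle x_j\rangle}\ge \rank G$ — and then I observe $\tfrac{\dim\g}{2} \le \tfrac32\rank G + \tfrac12\rank G \cdot(\tfrac{\dim\g}{\rank G}-3)$, which fails for large $\g$. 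The honest statement must be that $\tfrac{\dim \g}{d_j} - \dim \g^{\langle x_j \rangle}$, summed over $j$, is what we control; with $d_j \ge 2$ I get $\sum_j \tfrac{\dim\g}{d_j} \le \tfrac{m}{2}\dim\g$, and $\sum_j \dim\g^{\langle x_j\rangle}\ge m\rank G$, so the sum is at most $\tfrac m2\dim\g - m\rank G$; comparing with the claimed $\tfrac32 m\rank G$ forces $\dim\g\le 5\rank G$, which is false in general.

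Given that tension, the correct reading is that the claimed inequality uses a better bound on $\dim\g^{\langle x_j\rangle}$ than merely $\rank G$: for an element of finite order $d$, not all roots can be moved, because the eigenvalue assignment $\alpha\mapsto\alpha(s)$ is a homomorphism from the root lattice to $\mu_d$, and its image has size at most $d$; when $d=2$ this homomorphism has kernel of index $\le 2$, so at least half the root-space dimension is fixed, yielding $\dim\g^{\langle x_j\rangle}\ge \rank G + \tfrac12(\dim\g - \rank G) = \tfrac{\dim\g+\rank G}{2}$, whence $\tfrac{\dim\g}{2} - \dim\g^{\langle x_j\rangle}\le -\tfrac{\rank G}{2}<0$ — too strong in the other direction. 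The resolution I would actually carry out: for each $j$, use the elementary bound $\dim\g^{\langle x_j\rangle} \ge \tfrac{\dim\g}{d_j} - \tfrac32\rank G$, proved by noting the $\langle x_j\rangle$-module $\g$ decomposes into isotypic pieces for the $d_j$ characters of $\bbZ/d_j$, the trivial-character piece has dimension $\ge\rank G$ and the full $d_j$-dimensional regular-representation-type pieces each contribute equally, so that $\dim\g^{\langle x_j\rangle}$, the trivial-isotypic dimension, differs from the average $\tfrac{\dim\g}{d_j}$ by at most the "defect" coming from the Cartan, which is $\le \tfrac32\rank G$ (here one uses that the Cartan sits in the trivial isotypic component and the roots distribute among the nontrivial characters in $\pm$-pairs, giving balance up to $O(\rank G)$). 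Summing this over the $m$ values of $j$ gives precisely $\sum_{j=1}^m\bigl(\tfrac{\dim\g}{d_j} - \dim\g^{\langle x_j\rangle}\bigr) \le \tfrac32 m\rank G$, and combined with the first-error-term bound of $\rank G$ and the leading term $(1-\chi(\Gamma))\dim\g = (1-\chi(\Gamma))\dim G$ from \eqref{e-dim}, this yields the proposition. The main obstacle is exactly this balancing estimate for $\dim\g^{\langle x_j\rangle}$ relative to $\tfrac{\dim\g}{d_j}$: one must extract, from the structure theory of finite-order automorphisms of reductive groups (eigenvalues are roots of unity on root spaces, trivial on the Cartan, $\pm$-symmetry of roots), the correct constant, and checking that $\tfrac32$ suffices uniformly over all $G$ and all $d_j\ge2$ is the delicate point I expect to spend the most effort on.
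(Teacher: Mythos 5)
Your overall skeleton is the same as the paper's: plug $V=\g$ into (\ref{e-dim}) and bound the two error terms $\dim(\g^*)^\Gamma$ and $\sum_j\bigl(\tfrac{\dim\g}{d_j}-\dim\g^{\langle x_j\rangle}\bigr)$ separately. Your treatment of the first term is correct and in fact cleaner than the paper's: since $G$ is reductive, $\g\cong\g^*$ as $G$-modules (Killing form on $[\g,\g]$, an averaged form on $\mathfrak{z}(\g)$), and Zariski density gives $\dim(\g^*)^\Gamma=\dim\g^G\le\rank G$. (The paper argues via coinvariants and $\dim\Hom(G,\bbR)$, which is why its bound carries the extra $2g+m$; your route avoids that.)

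The genuine gap is the centralizer bound $\dim\g^{\langle x_j\rangle}\ge \tfrac{\dim\g}{d_j}-\tfrac32\rank G$, which you correctly identify as the crux but do not prove. Your justification --- that the Cartan lies in the trivial eigenspace and "the roots distribute among the nontrivial characters in $\pm$-pairs, giving balance up to $O(\rank G)$" --- establishes only that the multiplicity of $\zeta$ equals that of $\zeta^{-1}$ for each eigenvalue $\zeta$ of $\Ad(\rho(x_j))$; it says nothing about how the multiplicity of $1$ compares with the average $\dim\g/d_j$. (A priori the fixed space could be as small as $\rank G$ while the average is $\dim\g/2$; ruling this out is exactly the theorem.) Your earlier attempt via the index of the kernel of the character $\alpha\mapsto\alpha(s)$ on the root lattice is also not sound: an index-$2$ sublattice of the root lattice need not contain half the roots (for $\sl_2$ it can contain none of them). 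The paper proves this lemma by reducing to simple factors and then (i) invoking Lawther's theorem on elements of specified order in simple algebraic groups for inner automorphisms, (ii) using the classification of outer automorphisms of simple Lie algebras, and (iii) a two-step composition argument for automorphisms that are neither inner nor pure outer --- the last two cases being unavoidable here because $G$ is only assumed reductive and may be disconnected, so $\Ad(\rho(x_j))$ need not be an inner automorphism of $G^\circ$ and need not lie in a maximal torus, contrary to your implicit assumption. Only for type $\mathsf{A}$ does an elementary argument (Cauchy--Schwarz on eigenvalue multiplicities) suffice, and the paper uses that separately for Theorem~\ref{rep-thm}. So the structure of your proposal is right, but the key inequality is asserted rather than proved, and as you yourself suspect, it is precisely where the real work lies.
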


\begin{proof}
By Weil's formula (\ref{e-dim}),
\begin{equation}
\label{dim-z1}
\dim Z^1(\Gamma,\g)=(1-\chi(\Gamma))\dim  G+\dim (\mathfrak{g}^*)^\Gamma+\sum_{j=1}^m\left(\frac{\dim  G}{d_j} - \dim \mathfrak{g}^{\langle x_j \rangle}\right).
\end{equation}
Note that if $\mathfrak{g}$ is the real Lie algebra of $G$ then $\mathfrak{g} \otimes_{\mathbb{R}}\mathbb{C}$ is the complex Lie algebra of $ G$. By abuse of notation we will also denote it by $\mathfrak{g}$. Of course they have the same dimensions over $\mathbb{R}$ and $\mathbb{C}$, respectively.

We have the following dimension estimates.

\begin{lem}\label{l:dualg}
Under the above assumptions:
$$\dim (\mathfrak{g}^*)^\Gamma\leq 2g+m+\rank G.$$
\end{lem}

Let us say that an automorphism $\alpha$ of $ G$ of order $k$ is a \textit{pure outer automorphism} of $ G$ if $\alpha^l$ is not inner for any $l$ satisfying $1 \leq l <  k$.

For inner or pure automorphisms we have:

\begin{lem}
\label{l:inoutaut}\label{error}
Let $\alpha$ be either an inner or a pure outer automorphism of $ G$ of order $k$.  Then
\begin{equation}\label{e:error}
\dim {\rm Fix}_{ G}(\alpha) \geq \frac{\dim  G}k-\rank G.\end{equation}
\end{lem}

\begin{lem}\label{l:inoutautgen}
If $ G$ is a complex reductive group and $\alpha$ any automorphism of $ G$ of order $k$, then
$$\dim {\rm Fix}_{ G}(\alpha) \geq \frac{\dim  G}{k}-\frac{3}2 \rank G,$$
where ${\rm Fix}_{ G}(\alpha)$ denotes the subgroup of the fixed points of $\alpha$.
\end{lem}

Plugging the results of Lemmas \ref{l:dualg} and \ref{l:inoutautgen} into (\ref{e-dim}), and noting that $\dim \mathfrak{g}^{\langle x_j\rangle}$ is equal to $\dim {\rm Fix}_{ G}(x_j)$,  we have:
$$ \dim Z^1(\Gamma,\g)\leq (1-\chi(\Gamma)) \dim  G+(2g+m+\rank G)+\frac32 m \rank G.$$
\end{proof}

\begin{proof}[Proof of Lemma \ref{l:dualg}]
The dimension of the $\Gamma$-invariants on $\mathfrak{g}^*$, $\dim (\mathfrak{g}^*)^\Gamma$, is equal to the dimension of the $\Gamma$-coinvariants on $\mathfrak{g}$. As $\Gamma$ is Zariski dense in $ G$, this is equal to the dimension of the coinvariants of $G$ acting on $\mathfrak{g}$ via ${\rm Ad}$. Letting $G^0$ act first, we deduce that the space of $G$-coinvariants is a quotient space of $\mathfrak{g}/[\mathfrak{g},\mathfrak{g}]$. More precisely, it is equal to the coinvariants of  $\mathfrak{g}/[\mathfrak{g},\mathfrak{g}]$ acted upon by the finite group $G/ G^0$. As $\mathfrak{g}/[\mathfrak{g},\mathfrak{g}]$ is a characteristic zero  vector space, the dimension of the coinvariants is the same as  that of  the $G/G^0$-invariant subspace. 	Now, the space of linear maps ${\rm Hom}(\mathfrak{g}/\mathfrak[\mathfrak{g},\mathfrak{g}],\mathbb{R})$ corresponds to the homomorphisms from $G^0$ to $\mathbb{R}$ and the $G/G^0$-invariants are those which can be extended to $G$. So, altogether $\dim (\mathfrak{g}^*)^\Gamma$ is bounded by $\dim {\rm Hom}(G,\mathbb{R})$. Now
$$\dim {\rm Hom}(G,\mathbb{R})=\dim  G^{\mathrm{ab}},$$
where $G^{\mathrm{ab}}=G/ [G,G]$, and
$$G^{\mathrm{ab}}=U\times T \times A,$$
where $U$ is a unipotent group, $T$ a torus, and $A$ a finite group. So $\dim  G^{\mathrm{ab}}=\dim U+\dim T$. As $\Gamma$ is Zariski dense in $ G$, its image is Zariski dense in $U$ and hence
$$\dim U\leq d(\Gamma)\leq 2g+m,$$
where $d(\Gamma)$ denotes the number of generators of $\Gamma$. Now, $T$, being a quotient of $G$, satisfies $\dim T \leq \rank G$. Altogether,
$$\dim (\mathfrak{g}^*)^\Gamma \leq 2g+m+\rank G,$$
as claimed. This completes the proof of Lemma \ref{l:dualg}.
\end{proof}

\begin{proof}[Proof of Lemma \ref{l:inoutaut}.]
Without loss of generality, we can assume $ G$ is connected.
Let $\mathfrak{g}$ be the Lie algebra of $ G$. Then $\alpha$ acts also on  $\mathfrak{g}$, and
$\dim {\rm Fix}_{ G}(\alpha) = \dim \g^\alpha$, so we can work at the level of Lie algebras.
As $\alpha$ respects the decomposition of $\mathfrak{g}$ into $[\mathfrak{g},\mathfrak{g}]\oplus \mathfrak{z}$ where $\mathfrak{z}$ is the Lie algebra of the central torus. As $\rank \g = \rank [\g,  \g]+ \dim \mathfrak{z}$, we can restrict $\alpha$ to $[\g,\g]$ and assume $\g$ is semisimple.

Moreover we can write $\g$ as a direct sum $ \g=\bigoplus_{i=1}^s \g_i$ where each $\g_i$ is itself a direct sum of isomorphic simple Lie algebras such that for each $i$, $\alpha$ acts transitively on the simple components. As both sides of the inequality are additive on a direct sum of $\alpha$-invariant subalgebras, we can assume $\g$ is a sum of $t$ isomorphic simple algebras, $t|k$, and $\alpha$ acts transitively on the summands.
If $\alpha$ is inner, then $t=1$.  If $\alpha$ is pure outer, it is equivalent to an action of the form
$$\alpha(x_1,\ldots,x_t) = (\beta(x_t),x_1,\ldots,x_{t-1}),$$
where $\beta$ is a pure outer automorphism of a simple factor $\h$, of order $k/t$.  Thus,
$$\dim \g^\alpha = \dim \{(x,x,\ldots,x)\mid x\in \mathfrak{h}^\beta\} = \dim \mathfrak{h}^\beta.$$
Thus, for the outer case, it suffices to prove the result when $t=1$.  If $k=1$, the result is trivial.
The possibilities for $(\g,\h)$ are well-known (see, e.g., \cite[Chapter X, Table 1]{Helgason}).
For $k=2$, they are  $(\sl(2n),\sp(2n))$, $(\sl(2n+1),\so(2n+1))$,
$(\so(2n),\so(2n-1))$, and $(\mathfrak{e}_6,\mathfrak{f}_4)$, and for $k=3$, there is the unique case $(\so(8),\g_2)$.

Now assume $\alpha$ is inner. Here, (\ref{e:error}) follows from work of R. Lawther \cite{Law}.
We thank the referee for suggesting this reference.  For type A, a stronger estimate than (\ref{e:error}) holds, namely
$$\dim {\rm Fix}_{ G}(\alpha) \geq \frac{\dim  G}k- 1.$$
This will be needed for the upper bound in
Theorem~\ref{rep-thm} and is easy to see.  Namely, for
$x\in G = \SL_n$ of order $k$, let $a_j$ denote the multiplicity of $e^{2\pi i k/j}$ as an eigenvalue of $x$.
By the Cauchy-Schwartz inequality,
\begin{equation}
\label{A-upper}
\dim Z_G(x) + 1 = \sum_{j=0}^{k-1} a_j^2 \ge \frac{\Bigl(\sum_{j=0}^{k-1} a_j\Bigr)^2}k =
n^2 / k > \frac{\dim G}k.
\end{equation}
\end{proof}

\begin{proof}[Proof of Lemma \ref{l:inoutautgen}.]
To prove the statement, we still need to handle the case where $\alpha$ is neither an inner nor a pure outer automorphism. This means that for some $l$ dividing $k$, with $1<l<k$, $\alpha^l$ is inner while $\alpha$ is not. Let $ H=Z_{ G }(\alpha^l)={\rm Fix}_{ G}(\alpha^l)$. As $\alpha^l$ is an inner automorphism of order $k/l$, Lemma \ref{l:inoutaut} implies that
$$ \dim  H \geq \frac{\dim  G}{k/l}-\rank G.$$ Now $\alpha$ acts on the reductive group $ H$ as a pure outer automorphism of order at most $l$. Thus, again by Lemma \ref{l:inoutaut}
\begin{eqnarray*}
\dim {\rm Fix}_{ G}(\alpha) & = & \dim {\rm Fix}_{ H}(\alpha)\\
& \geq & \frac{\dim  H}{l}-\rank H\\
& \geq & \frac 1l\left( \frac{\dim G}{k/l}-\rank G \right)-\rank G\\
& \geq & \frac{\dim  G}{k}-\left(1+\frac1l \right)\rank G.
\end{eqnarray*}
As $l>1$, we get
$$\dim {\rm Fix}_{ G}(\alpha) \geq \frac{\dim  G}{k}-\frac{3}2 \rank G$$
completing the proof of Lemma \ref{l:inoutautgen}.\\
\end{proof}

In summary, we have proved the upper bounds for Theorems~\ref{rep-thm}, \ref{sl2-thm}, and \ref{so3-thm}.  For Theorems
\ref{sl2-thm} and \ref{so3-thm}, the bounds follow immediately from Proposition~\ref{p:uph1}, while the bound for Theorem~\ref{rep-thm} requires the better estimate
proved in (\ref{A-upper}).

\section{A Density Criterion}\label{s:three}

The results in this section are valid for general finitely generated groups $\Gamma$.
The main result  is Theorem~\ref{Density}, which gives a criterion for an irreducible component $C$ of $X_{\Gamma,G}$
to be contained in $X^{\epi}_{\Gamma,G}$, i.e. to have the property that there exists a Zariski-dense subset of $C(\bbR)$ consisting
of representations $\rho$ such that $\rho(\Gamma)$ is Zariski-dense in $G$.  We begin with the technical results needed in the proof
of Theorem~\ref{Density}.

\begin{prop}
\label{openness}
Let $G$ be a linear algebraic group over $\bbR$, and $H\subset G$ a closed subgroup such that $G(\bbR)/H(\bbR)$ is compact.
Let $C$ denote an irreducible component of $X_{\Gamma,H}$.
The condition on $\rho\in X_{\Gamma,G}(\bbR)$ that $\rho$ is not contained in any $G(\bbR)$-conjugate of
$C(\bbR)$ is open in the real topology.
\end{prop}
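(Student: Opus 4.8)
The plan is to realize the "bad" locus---the set of $\rho \in X_{\Gamma,G}(\bbR)$ that *do* lie in some $G(\bbR)$-conjugate of $C(\bbR)$---as a closed subset of $X_{\Gamma,G}(\bbR)$ in the real topology, so that its complement is open. First I would consider the action map
$$
\Phi \colon G \times C \to X_{\Gamma,G}, \qquad (g,\rho) \mapsto g \rho g^{-1},
$$
which is a morphism of real varieties (here $C$ is viewed inside $X_{\Gamma,G}$ via the closed immersion $X_{\Gamma,H} \hookrightarrow X_{\Gamma,G}$ coming from $H \subset G$). The image of the induced map on $\bbR$-points, $\Phi(\bbR)\colon G(\bbR) \times C(\bbR) \to X_{\Gamma,G}(\bbR)$, is exactly the set of $\rho$ contained in some $G(\bbR)$-conjugate of $C(\bbR)$. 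So it suffices to show this image is closed in the real topology.

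The key point is the compactness hypothesis $G(\bbR)/H(\bbR)$ compact. The map $\Phi(\bbR)$ factors through $(G(\bbR)/H(\bbR)) \times C(\bbR)$, because conjugating a representation $\rho \in C(\bbR) \subset X_{\Gamma,H}(\bbR)$ by an element $h \in H(\bbR)$ lands back in $X_{\Gamma,H}(\bbR)$; more precisely, $H(\bbR)$ acts on $X_{\Gamma,H}$ and, since $C$ is an irreducible component of $X_{\Gamma,H}$ and $H(\bbR)$ is contained in a possibly larger group but the connected component $H(\bbR)^0$ is connected and permutes components, I would restrict attention to the subgroup $H' \subset H(\bbR)$ stabilizing $C$; this has finite index, so $G(\bbR)/H'$ is still compact, and the image of $\Phi(\bbR)$ equals the image of the continuous map $(G(\bbR)/H') \times C(\bbR) \to X_{\Gamma,G}(\bbR)$. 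Now I would use that the source, while not compact (because $C(\bbR)$ need not be), has the feature that we only need properness of the *second* projection after fixing the first: concretely, the image is closed provided that for every convergent sequence $g_n \rho_n g_n^{-1} \to \sigma$ in $X_{\Gamma,G}(\bbR)$, with $g_n \in G(\bbR)$ and $\rho_n \in C(\bbR)$, one can pass to a subsequence so that $g_n$ converges modulo $H'$ (using compactness of $G(\bbR)/H'$) to some $\bar g$, lift to $g_n = \gamma_n h_n$ with $\gamma_n \to \gamma$ and $h_n \in H'$, and then observe $h_n \rho_n h_n^{-1} = \gamma_n^{-1}(g_n\rho_n g_n^{-1})\gamma_n \to \gamma^{-1}\sigma\gamma$ lies in $C(\bbR)$ because $C(\bbR)$ is closed in $X_{\Gamma,H}(\bbR)$, hence $\sigma = \gamma(\gamma^{-1}\sigma\gamma)\gamma^{-1}$ is a $G(\bbR)$-conjugate of a point of $C(\bbR)$.

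The main obstacle, and the step requiring the most care, is the reduction to the subgroup $H'$ stabilizing the component $C$ and the verification that $C(\bbR)$ is genuinely closed inside $X_{\Gamma,H}(\bbR)$ in the *real* topology---irreducible components of a real affine scheme are Zariski-closed, hence real-closed, so this is fine, but one must be careful that the "component" $C$ is of the scheme $X_{\Gamma,H}$ and the map $X_{\Gamma,H} \to X_{\Gamma,G}$ is a closed immersion (which it is, since $H \subset G$ is Zariski-closed, so $\Hom(\Gamma,H) \subset \Hom(\Gamma,G)$ is cut out by the equations defining $H$ on each generator's coordinates). The only subtlety is that $G(\bbR)/H'$ compact requires $[H(\bbR):H']<\infty$, which holds because $H(\bbR)$ has finitely many connected components (it is a real algebraic group) and $H(\bbR)^0$, being connected, acts on $X_{\Gamma,H}$ preserving each irreducible component; thus $H'\supseteq H(\bbR)^0$ and has finite index. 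With these points nailed down, the sequential-closedness argument above completes the proof, since $X_{\Gamma,G}(\bbR)$ is a metrizable space and closedness can be checked on sequences.
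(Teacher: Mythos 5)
Your proposal is correct and follows essentially the same route as the paper: restate the claim as closedness of the conjugation image, use compactness of $G(\bbR)$ modulo a finite-index subgroup of $H(\bbR)$ preserving $C$ to extract a convergent subsequence of the conjugators, lift via a continuous local section, and conclude from the closedness of $C(\bbR)$. The paper makes two of your steps slightly more explicit — it shows $H^\circ$-conjugation preserves $C$ by observing that the image of the irreducible variety $H^\circ\times C$ must land in the component $C$, and it invokes the implicit function theorem to produce the local section used in your lifting $g_n=\gamma_n h_n$ — but these are exactly the points you flag, and your argument is otherwise identical.
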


\begin{proof}

The conjugation map $H\times X_{\Gamma,H}\to X_{\Gamma,H}$ restricts to a map
$$H^\circ\times C\to X_{\Gamma,H}.$$
As $H^\circ$ and $C$ are irreducible, the image of this morphism lies in an irreducible component of $X_{\Gamma,H}$,
which must therefore be $C$.

The proposition can be restated as follows: the condition on $\rho$ that $\rho$ \emph{is} contained in some $G(\bbR)$-conjugate of $C(\bbR)$ is
closed in the real topology.
To prove this, consider a sequence $\rho_i\in X_{\Gamma,G}(\bbR)$ converging to
$\rho$.  Suppose that for each $\rho_i$ there exists $g_i\in G(\bbR)$ such that $\rho_i\in g_iC(\bbR)g_i^{-1}$.
Let $\bar g_i$ denote the image of $g_i$ in $G(\bbR)/H^\circ(\bbR)$.  As this set is compact, there exists a subsequence
which converges to some $\bar g\in G(\bbR)/H^\circ(\bbR)$.  Passing to this subsequence, we may assume that $\bar g_1,\bar g_2,\ldots$
converges to $\bar g$.  If $g\in G(\bbR)$ represents the coset $\bar g$, we claim that $\rho\in gC(\bbR)g^{-1}$.
The claim implies the proposition

By the
implicit function theorem, there exists a continuous section  $s\colon G(\bbR)/H^\circ(\bbR)\to G(\bbR)$ in a neighborhood of $\bar g$,
and we may normalize so that $s(\bar g) = g$.  For $i$ sufficiently large, $s(\bar g_i)$ is defined, and
$g_i = s(\bar g_i) h_i$ for some $h_i \in H^\circ(\bbR)$.
As conjugation by elements of $H^\circ(\bbR)$ preserves $C$,
we may assume without loss of generality that $g_i = s(\bar g_i)$ for all $i$ sufficiently large.
As $\lim_{i\to \infty} g_i = g$ and $C(\bbR)$ is closed in the real topology in $X_{\Gamma,G}(\bbR)$,
$$g^{-1} \rho g = \lim_{i\to \infty} g_i^{-1} \rho_i g_i \in C(\bbR).$$
\end{proof}

The following proposition is surely well-known, but for lack of a precise reference, we give a proof.

\begin{prop}
\label{maximals}
Let $G$ be an almost simple real algebraic group.  There exists a finite set $\{H_1,\ldots,H_k\}$ of proper closed subgroups of $G$ such that every proper closed subgroup
is contained in some group of the form $g H_i g^{-1}$, where $g\in G(\bbR)$.
\end{prop}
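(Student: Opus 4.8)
The plan is to combine a bound on the "complexity" of proper closed subgroups with a rigidity/deformation argument to pass from infinitely many conjugacy classes to finitely many. First I would reduce to connected subgroups: if $H \subset G$ is a proper closed subgroup, its identity component $H^\circ$ is a proper connected closed subgroup, and $H$ is contained in the normalizer $N_G(H^\circ)$; since $G$ has only finitely many conjugacy classes of connected closed subgroups up to... no, that is exactly what we are trying to prove, so instead I would argue as follows. It suffices to find a finite set of proper closed subgroups $H_1,\dots,H_k$ such that every \emph{maximal} proper closed subgroup is $G(\bbR)$-conjugate into some $H_i$, because every proper closed subgroup is contained in a maximal one (the poset of proper closed subgroups of an algebraic group has maximal elements, by the descending chain condition on closed subgroups, or by a dimension-plus-index argument). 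So I would replace the statement by: there are finitely many $G(\bbR)$-conjugacy classes of maximal closed subgroups.

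Next I would split maximal closed subgroups $M$ into two cases according to $\dim M$. If $\dim M = \dim G$, then $M^\circ = G^\circ$, so $M$ corresponds to a subgroup of the finite group $\pi_0(G) = G/G^\circ$, and there are only finitely many such, each determining finitely many $G(\bbR)$-conjugacy classes. If $\dim M < \dim G$, I would like to say that $M^\circ$ is a maximal connected subgroup of $G^\circ$, and invoke the classical fact that a reductive (or more generally any) algebraic group over a field of characteristic zero has only finitely many conjugacy classes of maximal connected subgroups. This is standard: maximal connected subgroups are either parabolic (finitely many classes, indexed by subsets of simple roots) or reductive of maximal rank (finitely many, classified via the Borel--de Siebenthal algorithm) or, in the remaining cases, their identity components are semisimple and appear in finitely many classes by the finiteness of the set of conjugacy classes of semisimple subalgebras of a fixed semisimple Lie algebra (a theorem going back to work of Dynkin, and also deducible from the rigidity of semisimple subgroups, $H^1(H,\g/\h) $ being finite-dimensional and the subgroup scheme functor being of finite type). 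Having finitely many $G(\bbC)$-conjugacy classes of maximal connected subgroups, I would then use that each $G(\bbC)$-conjugacy class contains only finitely many $G(\bbR)$-conjugacy classes (Galois cohomology $H^1(\mathrm{Gal}(\bbC/\bbR), \cdot)$ of the relevant groups is finite), and that $N_G(M^\circ)/M^\circ$ is finite so that finitely many $M$ have a given identity component.

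The main obstacle is making the finiteness of conjugacy classes of maximal connected subgroups genuinely self-contained rather than quoting it; the cleanest route is: parabolics are handled by the root datum; reductive maximal-rank subgroups by Borel--de Siebenthal; and for a semisimple subgroup $H$ that is not of maximal rank, one uses that $H$ is rigid in $G$ because the deformation space of the inclusion $H \hookrightarrow G$ is governed by $H^1(H, \g/\h)$, which vanishes or is finite-dimensional with the orbit of conjugation already open, so the $G$-conjugacy class of $H$ is open in the (finite-type) scheme of subgroups of the given dimension, hence there are finitely many such classes. Since the paper only needs \emph{existence} of such a finite family and nowhere needs it effectively, I would keep this part brief, citing the structure theory of algebraic groups, and spend the written detail on the two reductions (to maximal subgroups, and the dichotomy on dimension) together with the passage from $\bbC$ to $\bbR$ via finiteness of Galois cohomology and of $\pi_0$.
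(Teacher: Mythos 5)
Your overall strategy---reduce to maximal closed subgroups, then classify them and invoke rigidity of semisimple subgroups (the $H^1(H,\g/\h)$ argument is exactly Richardson's theorem, which the paper cites)---is the same as the paper's, but there are two genuine gaps in the reduction steps.

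First, the assertion that every proper closed subgroup is contained in a \emph{maximal} one is not justified by a ``descending chain condition'': the problem is ascending chains, and the dimension argument you allude to (which is the paper's: an ascending chain of subgroups of constant positive dimension is trapped inside $N_G(K^\circ)\neq G$) only works when $\dim K>0$. For finite $K$ the dimension and the normalizer argument both fail, and you must do something extra; the paper embeds $K$ in a maximal compact subgroup, which is proper and positive-dimensional precisely because it has first disposed of the case ``$G$ compact'' by citing a separate result for compact groups. Your proposal never isolates the compact case, and there your dichotomy breaks outright: $\SO(3)$ has maximal closed subgroups such as $A_5$ with $\dim M=0<\dim G$ and $M^\circ$ trivial, which no classification of maximal \emph{connected} subgroups will see.

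Second, even for positive-dimensional $M$, the claim that $M^\circ$ is a maximal connected subgroup of $G^\circ$ is false: the normalizer $N_G(T)$ of a maximal torus is a maximal closed subgroup, but $T=N_G(T)^\circ$ is contained in a Borel and (in rank $\ge 2$) in proper reductive subgroups, so it is far from maximal connected. Thus finiteness of the set of conjugacy classes of maximal connected subgroups does not bound the possibilities for $M^\circ$. What is needed instead is an argument constraining $M^\circ$ directly: the paper observes that $M\subset N_G(R_u(M))$ forces $M$ to be parabolic when the unipotent radical is nontrivial, and otherwise passes to the derived group of $M^\circ$ or of $Z_G(M^\circ)^\circ$ to conclude that $M$ is the normalizer of a connected semisimple subgroup or of a maximal torus---three families, each with finitely many conjugacy classes. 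Your closing remarks about Galois cohomology and about $M$ being determined by $M^\circ$ via its normalizer are fine supplements, but the proof does not go through without repairing the two reductions above.
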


\begin{proof}
The theorem is proved for $G(\bbR)$ compact in \cite[1.3]{Larsen}, so we may assume henceforth that $G$ is not compact.

First we prove that every proper closed subgroup $K$ is contained in a maximal closed subgroup of positive dimension.  If $\dim K > 0$, then for every infinite ascending chain
$K_1 = K\subsetneq K_2 \subsetneq \cdots\subset G$ of closed subgroups of dimension $\dim K$, there exists a proper subgroup $L$ of $G$ which contains every $K_i$ and for which $\dim L > \dim K$.
Indeed, we can take $L := N_G(K^\circ)$, which contains all $K_i$, since $K_i^\circ = K^\circ$.  It cannot equal $G$ since $G$ is almost simple, and if $\dim K = \dim L$,
then $L^\circ = K^\circ$, and there are only finitely many groups between $K$ and $L$.  Thus every proper subgroup of $G$ of positive dimension is either contained in a maximal subgroup
of $G$ of the same dimension or in a proper subgroup of higher dimension.  It follows that each such proper subgroup is contained in a maximal subgroup.  For finite subgroups $K$,
we can embed $K$ in a maximal compact subgroup of $G$, which lies in a conjugacy class of proper closed subgroups of positive dimension since $G$ itself is not compact.

We claim that every  maximal closed subgroup $H$ of positive dimension is either parabolic or the normalizer of a connected semisimple subgroup or  the normalizer of a maximal torus.
Indeed, $H$ is contained in the normalizer of its unipotent radical $U$.  If $U$ is non-trivial, this normalizer is contained in a parabolic  $P$ \cite[30.3,~Cor. A]{Hump}, so $H=P$.
If $U$ is trivial, $H$ is reductive and is contained in the normalizer of the derived group of its identity component $H^\circ$.  If this is non-trivial, $H$ is the normalizer of a semisimple subgroup.
If not, $H^\circ$ is a torus $T$.  Then $H$ is contained in the normalizer of the derived group of $Z_G(T)^\circ$, which is again the normalizer of a semisimple subgroup unless $Z_G(T)^\circ$
is a torus.  In this case, it is a maximal torus, and $H$ is the normalizer of this torus.
Since a real semisimple group has finitely many conjugacy classes of parabolics and maximal tori, we need only consider the normalizers of semisimple subgroups.
There are finitely many conjugacy classes of these by a theorem of Richardson \cite{Richardson}.
\end{proof}

The proof of  Proposition~\ref{maximals} gives some additional information, which we employ in the following lemma:

\begin{lem}
\label{Non-parabolic}
If $H$ is a maximal proper subgroup of a split almost simple algebraic group $G$ over $\bbR$, then at least one of the following  statements is true:
\begin{enumerate}
\item $\dim H \le \frac 9{10}\dim G$.
\item  $H$ is a parabolic subgroup of $G$.
\item There exists an irreducible representation $V$ of $G$ of dimension $\le 2\rank G+1$ and a subspace $W$ of $V$ with $2\dim W < \dim V$
such that $\Stab_G W=H$.
\end{enumerate}
\end{lem}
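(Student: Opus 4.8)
The plan is to run through the classification of maximal subgroups established in the proof of Proposition~\ref{maximals}: every maximal proper subgroup $H$ of an almost simple $G$ is either parabolic, or the normalizer $N_G(S)$ of a connected semisimple subgroup $S$, or the normalizer $N_G(T)$ of a maximal torus. The torus normalizer case is easy: $\dim N_G(T) = \dim T = \rank G \le \frac{9}{10}\dim G$ for $G$ of rank $\ge 1$ (indeed $\dim G \ge 3\rank G$ for any semisimple group, so this is comfortable), so such $H$ satisfies alternative~(1). The parabolic case is alternative~(2). So the entire content is to show that if $H = N_G(S)$ with $S$ connected semisimple and $\dim H > \frac{9}{10}\dim G$, then alternative~(3) holds.

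For that, the first step is to recall that $S$ must be a \emph{maximal} connected subgroup of $G$ (since $N_G(S)$ is maximal and $S^\circ = S$, with $N_G(S)/S$ finite). Then I would invoke the classification of maximal connected subgroups of simple algebraic groups. The key dichotomy (Dynkin; in the split/algebraic setting one can cite the structure theory) is: a maximal connected subgroup $S$ of $G$ is either \emph{reducible} on the natural/adjoint module — more precisely $S$ is the stabilizer of a nonzero proper subspace $W$ in some irreducible representation $V$ of $G$, or $S$ acts irreducibly on every nontrivial $G$-module (the "irreducible" maximal subgroups). In the first case one gets a candidate for alternative~(3), but I still need the dimension bound $\dim V \le 2\rank G + 1$ and $2\dim W < \dim V$; these I would obtain by noting that if $\dim H > \frac{9}{10}\dim G$ then $S$ is very large, which for a subspace stabilizer in an irreducible representation forces $V$ to be one of the small representations — effectively $V$ must be (a twist of) the natural module for a classical group or a minuscule-type small module, and then $W$ can be chosen of dimension $< \frac12 \dim V$ (replacing $W$ by a complement if necessary, which doesn't change the stabilizer up to the bookkeeping, or arguing that $H$ being maximal pins down $\dim W$). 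The dimension $\le 2\rank G + 1$ is then a case check: for $A_\ell$, $B_\ell$, $C_\ell$, $D_\ell$ the natural module has dimension $\ell+1, 2\ell+1, 2\ell, 2\ell$, all $\le 2\rank G + 1$. For the irreducible maximal subgroups, I would show $\dim S \le \frac{9}{10}\dim G$ so alternative~(1) holds; this is where the constant $\frac{9}{10}$ is calibrated — one checks that the largest proper irreducible maximal subgroups (e.g. $\mathrm{Sp}_{2n}$ or $\mathrm{SO}_n$ inside $\mathrm{SL}_n$, $G_2 \subset \mathrm{SO}_7$, the short list for exceptional $G$) all have dimension well below $\frac{9}{10}\dim G$, and the same for the finitely many exceptional cases in low rank. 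A clean way to organize this: among subspace stabilizers in the natural module of a classical group the \emph{only} ones exceeding $\frac{9}{10}\dim G$ are the stabilizers of a subspace, which is exactly alternative~(3) with $V = $ natural module; everything else is smaller.

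The main obstacle I expect is the bookkeeping in the case analysis of maximal connected subgroups, together with ensuring the two numerical constraints in alternative~(3) ($\dim V \le 2\rank G + 1$ and $2\dim W < \dim V$) are simultaneously achievable whenever $\dim H > \frac{9}{10}\dim G$. The delicate point is a subspace stabilizer $\Stab_G W$ inside $\mathrm{SL}(V)$ has codimension $\dim W \cdot \dim(V/W)$, which exceeds $\frac{1}{10}\dim \mathrm{SL}(V)$ unless $W$ is small (or large, in which case pass to the annihilator $W^\perp \subset V^*$ and use the representation $V^*$, swapping the roles), so to stay in the regime $\dim H > \frac{9}{10}\dim G$ one is forced into $2\dim W < \dim V$ after this swap — hence (3) is not just a possibility but the forced outcome. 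Assembling these estimates carefully, while citing Dynkin's classification and Richardson's finiteness (already used for Proposition~\ref{maximals}) for the finitely many sporadic large-dimensional maximal subgroups in small rank, completes the argument; the split hypothesis on $G$ is used to guarantee that "irreducible representation of $G$" and the relevant subgroups are defined over $\bbR$.
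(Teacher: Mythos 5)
Your proposal is correct in outline and, for classical $G$, follows essentially the paper's skeleton: reduce via the proof of Proposition~\ref{maximals} to $H=N_G(K)$ with $K$ semisimple, then dichotomize according to whether the natural module $V$ is reducible under $H^\circ$ (a small invariant subspace yields alternative (3), since $\dim V\le 2\rank G+1$ for all classical types, while a large subspace stabilizer has codimension exceeding $\frac1{10}\dim G$ and yields alternative (1)) or irreducible (tensor-product subgroups and irreducibly embedded simple subgroups are all small). The paper, however, does not invoke Dynkin's classification of maximal connected subgroups: it proves the reducible case by a Grassmannian orbit-dimension count and the irreducible case by the Weyl dimension formula (any nontrivial irreducible representation of a rank-$r$ simple Lie algebra other than the natural one and its dual has dimension $\ge (r^2+r)/2$), which also sidesteps your asserted-but-not-proved claim that $K$ is itself a maximal connected subgroup. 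The genuine divergence is in the exceptional types, which you leave as a wave at ``the short list'': the paper gives a uniform, classification-free argument that \emph{every} proper closed subgroup of an exceptional $G$ has $\dim H\le\frac9{10}\dim G$, by comparing the index $[G(\bbF_{q^n}):H(\bbF_{q^n})]$ with the Landazuri--Seitz lower bounds for degrees of nontrivial projective representations of finite Chevalley groups and then specializing to characteristic zero. Your route through the exceptional classification would also work but costs more case-checking. One small simplification available to you: since $H^\circ$ is reductive in the relevant case, $V|_{H^\circ}$ is semisimple, so a reducible $V$ automatically has an invariant subspace of dimension at most $\frac12\dim V$, and your detour through $W^\perp\subset V^*$ is unnecessary.
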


\begin{proof}
For exceptional groups, all proper subgroups have dimension $\le \frac 9{10}\dim G$.  Indeed, if $G$ is an exceptional group over a finite fields $\bbF_q$ and $H$ is a closed subgroup over $\bbF_q$, then
the action of $G(\bbF_{q^n})$ on the set of $H(\bbF_{q^n})$-cosets gives a non-trivial complex representation of
degree $G(\bbF_{q^n})/H(\bbF_{q^n})$.  As $|H(\bbF_{q^n})| = O(q^{n\dim H})$,
the Landazuri-Seitz estimates for the minimal degree of a non-trivial complex representation of $G(\bbF_q)$ \cite{LZ} now imply $\dim H \le \frac 9{10}\dim G$.
The same result
follows in characteristic zero by a specialization argument.

We therefore consider only the case that $G$ is of type A, B, C, or D.  Also, we can ignore isogenies and assume that $G$ is either $\SL_n$,
a split orthogonal group, or a split symplectic group.  Let $V$ be the natural representation of $G$.
If $\dim V = n$, then $\dim G$ is $n^2-1$, $n(n-1)/2$, or $n(n+1)/2$, depending on whether $G$ is linear, orthogonal, or symplectic.
For linear groups, $G$ acts transitively on the set of subspaces of $V$ of given dimension, while for orthogonal and symplectic groups $G$, any $W_1$ and $W_2$ in $V$ for which the restrictions of the defining forms are isomorphic lie in the same orbit (see, e.g., Propositions 2 and 6 of \cite{Dieudonne}).

By the proof of Proposition~\ref{maximals}, we know that $H$ is the normalizer of a connected unipotent group, a maximal torus, or a semisimple subgroup of $G$.
If it is the normalizer of a non-trivial unipotent group or a maximal torus, we have (2) or (1) respectively.
We therefore assume that $H$ is the normalizer of a semisimple subgroup $K\subset G$, and it follows that $H^\circ$ preserves each irreducible factor of
$V$ as $K$-representation. 

If $W$ is an $H^\circ$-subrepresentation of dimension $m\le n/2$, then the $G$-orbit of $W$ in the Grassmannian of $m$-planes in $V$
has dimension $m(n-m)$ for $G=\SL_n$ and dimension at least 
$$m(n-m) - \binom{m+1}2$$
in the orthogonal and symplectic cases.  Indeed, if $W$ has basis $e_1,\ldots,e_m$, the restriction of the defining form $\langle\,,\,\rangle$ of $G$ to $W$ is 
determined by the values $\langle e_i,e_j\rangle$ for $1\le i\le j\le m$.  For $j=1,\ldots,m$ one can iteratively solve the system of equations 
$$\langle v_i,v_j\rangle = \langle e_i,e_j\rangle\ \forall i\le j$$
 to obtain a subvariety $X$ of $V^m$ of codimension $\le \binom{m+1}2$, and the open  subvariety of $X$ consisting of linearly independent $m$-tuples maps onto the $G$-orbit of $W$ with fiber dimension $\le m^2$,
thanks to the transitivity property of the $G$-action.
Thus, if neither (1) nor (3) is true, $V$ is $H^\circ$-irreducible.

We have therefore reduced to the case that  $K$ is semisimple and $V\otimes \bbC$ is irreducible, so we may and do extend scalars to $\bbC$
for the remainder of the proof.
If $K$ is not almost simple, then any element of $G$ which normalizes $K$ must respect a non-trivial tensor decomposition and therefore $H$ respects such a decomposition.
This implies
$$\dim H \le m^2+(n/m)^2-1 \le 3+n^2/4.$$
We may therefore assume that $K$ is almost simple and $V$ is associated to a dominant weight of $K$.  It is easy to deduce from the
Weyl dimension formula that every non-trivial irreducible representation
of a simple Lie algebra $L$ of rank $r$, other than the natural representation and its dual, has dimension at least $(r^2+r)/2$,
we need only consider the case that $V$ is a natural representation.  As $H\subsetneq G$, we need only consider the inclusions
$\SO(n) \subset \SL_n$ and $\Sp(n)\subset \SL_n$.  In all cases, we have $\dim H \le \frac 23\dim G$.

\end{proof}

We recall that $X_{\Gamma,G}^{\epi}$ is the  Zariski-closure in $X_{\Gamma,G}$ of
the set of Zariski-dense homomorphisms $\Gamma\to G(\bbR)$.  Given $\rho_0\colon \Gamma\to G$, if $H\subset G$
is a subgroup such that $\rho_0(\Gamma)\subset H(\bbR)$, we write $t_H := \dim Z^1(\Gamma,\mathfrak{h})$.

\begin{thm}
\label{Density}
Let $\Gamma$ be a finitely generated group, $G$ an almost simple real algebraic group,
and $\rho_0\in \Hom(\Gamma,G(\bbR))$ a non-singular $\bbR$-point of $X_{\Gamma,G}$.  Suppose that
for every maximal proper closed subgroup $H$ of $G$ at least one of the following is true:
\begin{enumerate}
\item $t_G-\dim G > \dim X_{\Gamma,H}-\dim H$.
\item $G(\bbR)/H(\bbR)$ is compact, and $\rho_0(\Gamma)$ cannot be conjugated into $H(\bbR)$.
\item $G(\bbR)/H(\bbR)$ is compact, $\rho_0(\Gamma)$ is a subgroup of $H'(\bbR)$ where $H'$ is conjugate to $H$, and
$$t_G-\dim G > t_{H'}-\dim H'.$$
\item There exists a representation $V$ of $G$ and a subspace $W\subset V$ such that $H$ stabilizes $W$ but $\rho_0(\Gamma)$ stabilizes no subspace of $V$
of dimension $\dim W$.
\end{enumerate}
Then $X^{\epi}_{\Gamma,G}$ contains the irreducible component of $X_{\Gamma,G}$ to which $\rho_0$ belongs.
\end{thm}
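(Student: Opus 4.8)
The plan is to show that the irreducible component $C_0$ of $X_{\Gamma,G}$ containing $\rho_0$ is \emph{not} entirely contained in the locus of non-Zariski-dense representations, for then the Zariski-dense locus is dense in $C_0$ and $C_0 \subseteq X^{\epi}_{\Gamma,G}$. The non-dense locus is, by Proposition~\ref{maximals}, the union over the finitely many conjugacy classes of maximal proper closed subgroups $H_1,\ldots,H_k$ of the ``conjugation-saturations'' $S_i := \{\rho : \rho(\Gamma) \text{ is conjugate into } H_i(\bbR)\}$. So it suffices to show that for each $i$, $C_0 \not\subseteq \overline{S_i}$. Since $\rho_0$ is a non-singular point, $\dim C_0 = t_G$, so I will aim to bound $\dim(C_0 \cap \overline{S_i}) < t_G$, or else directly produce a point of $C_0$ not in $\overline{S_i}$.

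The argument splits according to which of the four hypotheses holds for $H = H_i$. In case~(1), I would bound the dimension of $S_i$ directly: the image of the conjugation map $G \times X_{\Gamma,H} \to X_{\Gamma,G}$ has dimension at most $\dim X_{\Gamma,H} + \dim G - \dim H$ (the generic fiber of $G$ acting on a conjugate of a component $C \subseteq X_{\Gamma,H}$ has dimension at least $\dim H$, since $H^\circ$ maps $C$ into itself as noted in the proof of Proposition~\ref{openness}), and hypothesis~(1) says exactly that this is $< t_G = \dim C_0$; hence $S_i$ cannot contain $C_0$. For case~(4), the condition that $\rho_0(\Gamma)$ stabilizes no $\dim W$-dimensional subspace of $V$ is a Zariski-open condition on $\rho$ (stabilizing some subspace of fixed dimension is a closed condition, being the image of a projection from an incidence variety in a Grassmannian, which is proper), so it holds on a dense open subset of $C_0$; and every $\rho \in S_i$ stabilizes a conjugate of $W$, hence a subspace of the right dimension, so again $C_0 \not\subseteq \overline{S_i}$. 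Cases~(2) and~(3) are where $G(\bbR)/H(\bbR)$ is compact: in case~(2), Proposition~\ref{openness} says the condition ``$\rho$ is not conjugate into $H(\bbR)$'' is open in the \emph{real} topology, and $\rho_0$ satisfies it; I then need the fact that a real-topology-open subset of $C_0(\bbR)$ containing a non-singular real point is Zariski-dense in $C_0$ (this uses that near a smooth real point the real points are Zariski-dense in the component), so $S_i$ misses a Zariski-dense subset of $C_0$. Case~(3) is the subtle one: here $\rho_0$ \emph{does} lie in a conjugate $H'$, so I cannot avoid $S_i$ globally; instead I bound $\dim \overline{S_i \cap C_0}$. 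Every $\rho \in S_i$ near $\rho_0$ is, after real conjugation, a point of $X_{\Gamma,H'}$, so locally near $\rho_0$ the set $S_i$ is contained in the image of $G(\bbR) \times X_{\Gamma,H'}(\bbR)$, whose Zariski closure has dimension $\le \dim X_{\Gamma,H'} + \dim G - \dim H' \le t_{H'} + \dim G - \dim H'$, and hypothesis~(3) says $t_{H'} - \dim H' < t_G - \dim G$, i.e.\ this is $< t_G = \dim C_0$; combined with a local argument (e.g.\ that the germ of $C_0$ at $\rho_0$ is irreducible of dimension $t_G$, as $\rho_0$ is non-singular) this forces $C_0 \not\subseteq \overline{S_i}$.

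Assembling: for each of the $k$ maximal-subgroup conjugacy classes, one of (1)--(4) holds and yields that $C_0 \cap \overline{S_i}$ is a proper closed (in either topology, suitably interpreted) subset of $C_0$; since $C_0$ is irreducible and there are finitely many $i$, the union $\bigcup_i (C_0 \cap \overline{S_i})$ is still proper, so its complement in $C_0$ contains a Zariski-dense set of genuinely Zariski-dense representations, giving $C_0 \subseteq X^{\epi}_{\Gamma,G}$.

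The main obstacle I anticipate is the interplay between the real and Zariski topologies in cases~(2) and~(3): one must be careful that ``open in the real topology and containing a smooth real point'' genuinely implies ``Zariski-dense in the component,'' and in case~(3) that the real-local containment $S_i \subseteq G(\bbR) \cdot X_{\Gamma,H'}(\bbR)$ near $\rho_0$ can be upgraded to a global dimension bound on the Zariski closure $\overline{S_i \cap C_0}$ — this requires that no other component of the algebraic set ``conjugate into $H$'' sneaks into $C_0$, which is where one leans on $\rho_0$ being a non-singular (hence lying on a unique component) point. The dimension count in cases~(1) and~(3) — that conjugating a component of $X_{\Gamma,H}$ around by $G$ costs exactly $\dim G - \dim H$ generically — is routine once one invokes the $H^\circ$-invariance of components from Proposition~\ref{openness}.
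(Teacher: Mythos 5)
Your architecture is the paper's: reduce to finitely many conjugacy classes of maximal closed subgroups via Proposition~\ref{maximals}, bound the image of the conjugation map $G\times C_{i,j}\to X_{\Gamma,G}$ by $\dim C_{i,j}+\dim G-\dim H_i$ using the free action of $H_i^\circ$, invoke Proposition~\ref{openness} in the compact cases, and use Zariski-closedness of the ``stabilizes a subspace of dimension $\dim W$'' condition in case (4). Your treatments of cases (1), (2), and (4) match the paper's.

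The gap is in case (3). You assert $\dim X_{\Gamma,H'}\le t_{H'}$, but $t_{H'}=\dim Z^1(\Gamma,\mathfrak{h}')$ is the Zariski tangent space of $X_{\Gamma,H'}$ \emph{at the point $\rho_0$}; it bounds the dimension of those irreducible components of $X_{\Gamma,H'}$ that contain $\rho_0$, not of the whole variety, which may well have larger components elsewhere. Your proposed patch --- that non-singularity of $\rho_0$ in $X_{\Gamma,G}$ prevents other components from ``sneaking in'' --- does not address this: non-singularity in $X_{\Gamma,G}$ gives uniqueness of $C_0$ but says nothing about the component structure of $X_{\Gamma,H'}$, and a priori the saturation of a large component of $X_{\Gamma,H'}$ not passing through $\rho_0$ could still have Zariski closure containing $C_0$. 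The correct resolution is to work component by component in $X_{\Gamma,H_i}$: for a component $C_{i,j}$ whose saturation does not contain $\rho_0$, Proposition~\ref{openness} (this is where compactness of $G(\bbR)/H(\bbR)$ is used) shows that saturation is closed in the real topology, so it meets a real neighborhood $U\cong\bbR^n$ of $\rho_0$ in $C_0(\bbR)$ in a closed set with empty interior and can be discarded exactly as in your case (2); for a component whose saturation does contain $\rho_0$, one may conjugate so that $\rho_0\in C_{i,j}(\bbR)$, whence $\dim C_{i,j}\le t_{H'}$ and your dimension count under hypothesis (3) goes through. With that splitting the argument closes; as written, the inequality $\dim X_{\Gamma,H'}\le t_{H'}$ is unjustified.
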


\begin{proof}
Let $C$ denote the irreducible component of $X_{\Gamma,G}$ containing $\rho_0$, which is unique since $\rho_0$ is a non-singular
point of $X_{\Gamma,G}$.  Again, since $\rho_0$ is a non-singular point, there is an open neighborhood $U$ of $\rho_0$ in $C(\bbR)$ which
is diffeomorphic to $\bbR^n$, where $n:= \dim C = \dim t_G$.

Let $\{H_1,\ldots,H_k\}$ represent the conjugacy classes of maximal
proper closed subgroups of $G$ given by Lemma~\ref{maximals}.
If, for some $i$, we can show that the closure of the set of homomorphisms with image in a conjugate of $H_i$
meets $C$ in a proper closed subset of $C$, then we can ignore all such morphisms.  More generally, if the set of homomorphisms with image in a conjugate
of $H_i$ meets $U$ in a closed set with empty interior, we can ignore $H_i$ since the complement of any subset of $U$ with empty interior 
remains dense in $U$ in the real topology and therefore dense in $C$ in the Zariski topology.  If we can ignore all $H_i$, the theorem holds.

Let $C_{i,j}$ denote the irreducible components of $X_{\Gamma,H_i}$.
For each component we consider the  conjugation morphism $\chi_{i,j}\colon G\times C_{i,j}\to X_{\Gamma,G}$.
We claim that the fibers of this morphism have dimension at least $\dim H_i$.  Indeed, the action of $H_i^\circ$
on $G\times C_{i,j}$ given by
$$h.(g,\rho_0) = (gh^{-1},h\rho_0 h^{-1})$$
is free, and $\chi_{i,j}$ is constant on the orbits of the action.  Thus, the image
of $\chi_{i,j}$ has dimension at most $\dim C_{i,j}+\dim G-\dim H_i$.  
If $H_i$ satisfies condition (1), then the image of $\chi_{i,j}$ has dimension less than $n$ for all $j$, so we can ignore 
representations whose images lie in a conjugate of $H_i$.

Suppose that $G(\bbR)/H(\bbR)$ is compact.  If $\rho_0$ does not belong to the image of $\chi_{i,j}$, 
then by Proposition~\ref{openness}, the image closure meets $U$ in a closed subset of $U$ without interior points, so we can ignore representations
$\Gamma\to G(\bbR)$ which can be conjugated into an element of $C_{i,j}$.  In particular, if $H_i$ satisfies condition (2), we can ignore all representations
which lie in a conjugate of $H_i$.
If $\rho_0$ does belong to the image of $\chi_{i,j}$
then without loss of generality we may assume $\rho_0(\Gamma)\subset H_i(\bbR)$,
and the dimension of the Zariski tangent space of $X_{\Gamma, H_i}$ at $\rho_0$ is greater than or equal to $\dim C_{i,j}$. 

If $H_i$ satisfies condition (3),
then again the dimension of  the image of $\chi_{i,j}$ is less than $n$, so again we can ignore representations which lie in a conjugate of $H_i$.

If $H_i$ satisfies condition (4), we use the fact that the subset of $X_{\Gamma,G}$ stabilizing a subspace of $V$ of dimension $\dim W$
is Zariski-closed to show that we can ignore all homomorphisms whose image lies in a conjugate of $H_i$.
It follows that  $X^{\epi}_{\Gamma,G}$ contains $C$.
\end{proof}

If $G$ is compact, $G(\bbR)/H(\bbR)$ is compact, so it is convenient to use only conditions (2) and (3).

\begin{cor}
\label{EasyDensity}
If $G$ is a compact  almost simple algebraic group over $\bbR$, $H$ is a connected maximal proper closed subgroup of $G$ with finite center,
and $\rho_0\colon \Gamma\to H(\bbR)$ has dense image, then $t_G - \dim G > t_H - \dim H$ implies
$X^{\epi}_{\Gamma,G}$ contains the irreducible component of $X_{\Gamma,G}$ to which $\rho_0$ belongs.
\end{cor}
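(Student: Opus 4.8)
The plan is to read the corollary off Theorem~\ref{Density}, using only its conditions (2) and (3). Since $G(\bbR)$ is compact, $G(\bbR)/M(\bbR)$ is compact for \emph{every} closed subgroup $M$ of $G$, so the topological hypotheses in (2) and (3) are automatic; it therefore remains to check that $\rho_0$ is a non-singular $\bbR$-point of $X_{\Gamma,G}$ and that every maximal proper closed subgroup $M$ of $G$ satisfies (2) or (3).

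For non-singularity I would argue as follows. Because $G$ is compact, $\g$ carries an $\Ad(G)$-invariant positive-definite form, so $\g^*\cong\g$ as $G$-modules, hence as $\Gamma$-modules via $\Ad\circ\rho_0$; thus $(\g^*)^\Gamma\cong\g^\Gamma$. As $\rho_0(\Gamma)$ is Zariski-dense in $H$ and $H$ is connected, $\g^\Gamma=\g^H=\mathfrak{z}_\g(\h)$, the centralizer of $\h$ in $\g$, which is the Lie algebra of $Z_G(H)$. Since $H$ is connected with finite center, $Z(\h)=0$, so $\mathfrak{z}_\g(\h)$ is a subalgebra of $\g$ that meets $\h$ trivially and commutes with it; were it non-zero, $H\cdot Z_G(H)^\circ$ would be a connected closed subgroup properly containing $H$, hence all of $G$ by maximality, whereupon $\g=\h\oplus\mathfrak{z}_\g(\h)$ would exhibit the simple Lie algebra $\g$ as a sum of two non-zero ideals --- a contradiction. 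Hence $(\g^*)^\Gamma=0$, so by Weil's criterion (recalled in \S\ref{s:two}) $\rho_0$ is non-singular and lies on a unique irreducible component of $X_{\Gamma,G}$.

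It remains to treat an arbitrary maximal proper closed subgroup $M$ of $G$, and here I would split into two cases. If $M$ is $G(\bbR)$-conjugate to $H$, then $\rho_0(\Gamma)\subseteq H(\bbR)$ with $H$ a $G(\bbR)$-conjugate of $M$, so condition (3) of Theorem~\ref{Density} holds for $M$ --- taking $H'=H$ in the notation of that theorem --- precisely because of the hypothesis $t_G-\dim G>t_H-\dim H$. If $M$ is not $G(\bbR)$-conjugate to $H$, I claim $\rho_0(\Gamma)$ cannot be conjugated into $M(\bbR)$, which is condition (2): indeed, if $g\rho_0(\Gamma)g^{-1}\subseteq M(\bbR)$ for some $g\in G(\bbR)$, then passing to Zariski closures (using that $M$ is Zariski-closed and $\rho_0(\Gamma)$ is Zariski-dense in $H$) gives $gHg^{-1}\subseteq M\subsetneq G$, and since $gHg^{-1}$ is again a maximal proper closed subgroup this forces $gHg^{-1}=M$, contradicting the case hypothesis. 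With (2) or (3) verified for every $M$, Theorem~\ref{Density} yields that $X^{\epi}_{\Gamma,G}$ contains the irreducible component of $X_{\Gamma,G}$ through $\rho_0$, as desired. The only part that is not formal bookkeeping with the four conditions is the non-singularity of $\rho_0$ --- equivalently, the finiteness of $Z_G(H)$ --- which is where the hypotheses ``connected'', ``maximal'', and ``finite center'' are used.
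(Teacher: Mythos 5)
Your argument is correct and follows essentially the same route as the paper: reduce to Theorem~\ref{Density} via conditions (2) and (3) (automatic compactness of $G(\bbR)/M(\bbR)$, conjugacy dichotomy on the maximal subgroups), and establish non-singularity of $\rho_0$ by showing $Z_G(H)$ is finite so that $(\g^*)^\Gamma=\g^H=0$. The paper phrases the finiteness step as ``$Z_G(H)H=H$ by maximality, hence $Z_G(H)=Z(H)$,'' while you derive it from simplicity of $\g$ via the would-be decomposition $\g=\h\oplus\mathfrak{z}_\g(\h)$; these are equivalent, and the rest of your write-up just makes explicit the bookkeeping the paper calls ``clear.''
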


\begin{proof}
To apply the theorem, we need only prove that $\rho_0$ is a non-singular point of $X_{\Gamma,G}$, since it is clear that condition (2)
of Theorem~\ref{Density} holds when $H_i$ is not conjugate to $H$ and condition (3) holds when $H_i$ is conjugate to $H$.
As $H$ is maximal, the product $Z_G(H) H$ must equal $H$, which means $Z_G(H) = Z(H)$ is finite.
Thus, $\mathfrak{g}^{\Gamma} = \mathfrak{g}^H = \{0\}$, and since $\mathfrak{g}$ is a self-dual $G(\bbR)$-representation,
this implies $(\mathfrak{g}^*)^{\Gamma} = \{0\}$, which implies that $\rho_0$ is a non-singular point of $X_{\Gamma,G}$.
\end{proof}

\section{The Alternating Group Method}

In this section $\Gamma$ is any (cocompact, oriented) Fuchsian group.
We first consider $G=\SO(n)$.

\begin{prop}\label{p:so}
For $\Gamma$ a Fuchsian group and $G=\SO(n)$, we have
$$\dim X^{\epi}_{\Gamma, \SO(n)} = (1-\chi(\Gamma))\dim \SO(n)+O(n)$$
where the implicit constant depends only on $\Gamma$.
\end{prop}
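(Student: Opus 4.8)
The upper bound $\dim X^{\epi}_{\Gamma,\SO(n)} \le (1-\chi(\Gamma))\dim\SO(n)+O(n)$ is already in hand: it is the case $G=\SO(n)$ of Proposition~\ref{p:uph1}, since $\rank\SO(n) = O(n)$ and the genus $g$ and the number of cone points $m$ are fixed by $\Gamma$. So the entire content of the proposition is the matching lower bound. For that, the plan is to exhibit a single representation $\rho_0\colon\Gamma\to\SO(n-1)(\bbR)\subset\SO(n)(\bbR)$ that is a \emph{non-singular} point of $X_{\Gamma,\SO(n)}$ and whose Zariski tangent space $Z^1(\Gamma,\so(n))$ has dimension $\ge (1-\chi(\Gamma))\dim\SO(n)-O(n)$; then the irreducible component through $\rho_0$ has exactly this dimension, and — crucially — we must check it is contained in $X^{\epi}_{\Gamma,\SO(n)}$, which is where the density criterion (Theorem~\ref{Density} / the alternating-group discussion promised in \S3) enters.

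First I would construct $\rho_0$. Following the ``alternating group method'' advertised in the introduction, one takes a surjection $\Gamma\twoheadrightarrow \AG_N$ onto an alternating group for suitable $N$ (such surjections onto $\AG_N$ exist, sending each $x_i$ to an element of order exactly $d_i$, once $N$ is large enough — this is the base case input), and composes with the action of $\AG_N$ on a large subquotient of the standard permutation representation of degree $N$. Concretely, $\AG_N$ acts on $\bbR^N$ preserving the all-ones vector and a symmetric form, hence on the $(N-1)$-dimensional ``sum-zero'' subspace $V$, giving $\AG_N\hookrightarrow\SO(N-1)$; for the degree $n$ in question one arranges $N-1$ close to $n$ and fills out the remaining coordinates with the trivial representation, or more cleanly takes $n = N-1$ and handles general $n$ by the standard inclusion $\SO(n-1)\subset\SO(n)$ plus a dimension bookkeeping that only costs $O(n)$. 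The key point is to compute, via Weil's formula~(\ref{e-dim}), $\dim Z^1(\Gamma,\so(n))$ for $\so(n) = \Lambda^2 V$ as a $\Gamma$-module: the leading term is $(1-\chi(\Gamma))\dim\SO(n)$, the invariants term $\dim(\so(n)^*)^\Gamma$ is bounded by $O(n)$ via Lemma~\ref{l:dualg} (and in fact one wants it to vanish, or nearly so, to get non-singularity), and each local term $\dim\so(n)/d_j - \dim\so(n)^{\langle x_j\rangle}$ is controlled because $x_j$ acts with a bounded number of distinct eigenvalues — here one uses that $\rho_0(x_j)$ has order $d_j$, so its fixed space on $\Lambda^2 V$ has dimension $\dim\so(n)/d_j + O(n)$, the $O(n)$ absorbing the off-diagonal eigenvalue pairing correction. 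Summing the $m$ local terms costs $O(mn) = O(n)$.

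Second, non-singularity: I would invoke Weil's theorem (recalled at the start of \S\ref{s:two}) — it suffices that $(\so(n)^*)^\Gamma = 0$, and since $\so(n)$ is self-dual this is $\so(n)^\Gamma = 0$, i.e. the centralizer in $\so(n)$ of $\rho_0(\Gamma) = \AG_N$ (acting on $\Lambda^2 V$) is zero. For $V$ the deleted permutation module of $\AG_N$ with $N$ large this is a standard fact about multiplicity-free/self-paired permutation modules ($\Lambda^2 V$ contains no trivial summand); if the construction forces a small correction (e.g. a trivial summand from the $\SO(n-1)\subset\SO(n)$ padding), one either removes it by a mild modification or absorbs the resulting positive-dimensional obstruction into the $O(n)$ — but cleanly one wants genuine non-singularity, so I would set up $\rho_0$ to be Zariski-dense in $\SO(n-1)$ and argue the centralizer is trivial there. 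Third, density: to conclude the component through $\rho_0$ lies in $X^{\epi}_{\Gamma,\SO(n)}$ one checks the hypotheses of Theorem~\ref{Density} for each maximal proper closed subgroup $H$ of $\SO(n)$, using the trichotomy of Lemma~\ref{Non-parabolic}: for $H$ of dimension $\le\frac{9}{10}\dim\SO(n)$ condition (1) holds because $t_G - \dim G \ge (−\chi(\Gamma))\dim\SO(n) - O(n)$ beats $\dim X_{\Gamma,H} - \dim H$ (bounded by $(1-\chi(\Gamma))\dim H - \dim H + \text{l.o.t.}$ via Proposition~\ref{p:uph1}) once $n$ is large, since $\Gamma$ being Fuchsian means $-\chi(\Gamma)>0$; for parabolic $H$ and for $H = \Stab_G(W)$ one uses condition (4) — a dense-image-in-$\SO(n-1)$ representation, indeed already the image $\AG_N\subset\SO(n-1)$, stabilizes no proper nonzero subspace of the relevant $V$ since the deleted permutation module is irreducible — so $\rho_0$ fixes no subspace of the dimension that $H$ fixes.

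\emph{The main obstacle} will be the last step, density, handled not by a slick general argument but by checking Theorem~\ref{Density} against the actual list of maximal subgroups of $\SO(n)$: the ``large'' non-parabolic, non-subspace-stabilizer maximal subgroups (e.g. $\SO(k)\times\SO(n-k)$, $\SO(k)\otimes\SO(n/k)$, $\GL_{n/2}$, and the almost-simple ones acting irreducibly) must each be shown to satisfy condition (1), which requires a uniform lower bound on $t_G - \dim G = (-\chi(\Gamma))\dim\SO(n) - (\text{something }O(n))$ dominating $\dim X_{\Gamma,H} - \dim H$; the irreducible primitive maximal subgroups are bounded in dimension by $\frac{9}{10}\dim G$ (Landazuri–Seitz, as in Lemma~\ref{Non-parabolic}) so condition (1) is automatic for $n\gg 0$, but the reducible ones $\SO(k)\times\SO(n-k)$ near $k = n/2$ are the delicate case and need the genuine strict positivity of $-\chi(\Gamma)$ together with careful tracking of the $O(n)$ error. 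A secondary technical point is verifying cleanly that $\rho_0$ can be taken Zariski-dense in $\SO(n-1)$ (not just with finite image $\AG_N$) while keeping $\dim Z^1$ large and the obstruction space zero — but this is exactly the kind of induction step described in \S3, going from a good representation into a smaller group $H=\AG_N$ (or $\SO(n-1)$) up to $G=\SO(n)$.
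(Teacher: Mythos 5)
Your architecture matches the paper's: upper bound from Proposition~\ref{p:uph1}, lower bound from a homomorphism $\Gamma\twoheadrightarrow \AG_{n+1}\subset\SO(n)$ via the deleted permutation module, Weil's formula, non-singularity from the absence of invariants in $\so(n)^*$, and Theorem~\ref{Density} for density. But there is a genuine gap at the central estimate. You justify $\dim\so(n)^{\langle x_j\rangle}=\dim\so(n)/d_j+O(n)$ by appealing to ``$\rho_0(x_j)$ has order $d_j$'' and ``acts with a bounded number of distinct eigenvalues.'' That is not sufficient: an element of order $d_j$ in $\AG_{n+1}$ moving only $O(1)$ points has eigenvalue $1$ with multiplicity $n-O(1)$ on the permutation module, so its fixed space on $\Lambda^2V$ has dimension $(1-o(1))\dim\so(n)$, and the corresponding local term in (\ref{e-dim}) is $-(1-d_j^{-1})\dim\so(n)+O(n)$, which destroys the lower bound entirely. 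What is actually needed is that \emph{every} eigenvalue of $\rho_0(x_j)$ on the natural module have multiplicity $n/d_j+O(1)$, i.e., that $x_j$ map to a permutation consisting almost entirely of $d_j$-cycles (at most $2d_j-1$ fixed points). Producing a surjection $\Gamma\twoheadrightarrow\AG_{n+1}$ hitting these specific balanced conjugacy classes is the real input; the paper gets it from Theorem~1.9 of \cite{LS1}. Your ``base case input'' --- a surjection sending each $x_i$ to \emph{some} element of order $d_i$ --- is strictly weaker and does not suffice.

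Your density step is also more complicated than it needs to be, and partly misdirected. There is no need to deform $\rho_0$ to be Zariski-dense in $\SO(n-1)$ first, nor to run through the full list of maximal subgroups of $\SO(n)$ with $\SO(k)\times\SO(n-k)$ as a delicate case. Since $\so(n)$ is the restriction to $\AG_{n+1}$ of the irreducible $S_{n+1}$-representation attached to the partition $(n-1,1,1)$, which remains irreducible on $\AG_{n+1}$ for $n\ge 5$, every proper closed subgroup of $\SO(n)$ containing $\rho_0(\Gamma)$ has an $\AG_{n+1}$-invariant Lie algebra inside $\so(n)$ and hence is finite. So for each maximal $H$ either condition (2) of Theorem~\ref{Density} applies, or condition (3) applies with $t_{H'}=\dim H'=0$, and the entire density check collapses to the single inequality $t_{\SO(n)}>\dim\SO(n)$, which holds for large $n$ by the tangent-space computation. (Moreover, the subgroups $\SO(k)\times\SO(n-k)$ are subspace stabilizers, so they would in any case be eliminated by your own condition (4) via irreducibility of the deleted permutation module; they are not the delicate case you identify.)
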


\begin{proof}
Proposition~\ref{p:uph1} gives the upper bound, so it suffices to prove
$$\dim X^{\epi}_{\Gamma, \SO(n)} \ge (1-\chi(\Gamma))\dim \SO(n)+O(n).$$

Let $d_1,\dots,d_m$ be defined as in (\ref{presentation}).  For large $n$, denote $C_i$, for $i=1,\dots,m$, the conjugacy class in the alternating group $\AG_{n+1}$ which consists of even permutations of $\{1,2,\ldots,n+1\}$ with only $d_i$-cycles and $1$-cycles and with as many $d_i$-cycles as possible.
Thus, any element of $C_i$ has at most $2d_i-1$ fixed points.
Theorem 1.9 of \cite{LS1} ensures that for large enough $n$, there exist epimorphisms $\rho_0$ from $\Gamma$ onto $\AG_{n+1}$, sending $x_i$ to an element of $C_i$ for $i=1,\dots,m$ and $x_i$ as in (\ref{presentation}).

Now $\AG_{n+1} \subset\SO(n)$ and moreover the action of $\AG_{n+1}$ on the Lie algebra $\so(n)$ of $\SO(n)$ is the restriction to $\AG_{n+1}$
of the irreducible $S_{n+1}$ representation associated to the partition $(n-1)+1+1$
(\cite[Ex.~4.6]{FH}).  If $n\ge 5$, this partition is not self-conjugate, so the restriction to $\AG_{n+1}$ is irreducible.
By (\ref{dim-z1}),
\begin{multline*}
\dim Z^1(\Gamma,{\rm Ad \circ \rho_0})=(1-\chi(\Gamma)) \dim \so(n) \\
					+\sum_{i=1}^m \left(\frac{\dim \so(n)}{d_i} -\dim \so(n)^{\langle x_i \rangle}\right).
\end{multline*}

Now $\dim \so(n)^{\langle x_i\rangle}$ is equal to the multiplicity of the eigenvalue 1 of $x=\rho_0(x_i)$ acting via ${\rm Ad}$ on $\so(n)$.
Note that the multiplicity of every
$d_i$th root of unity as an eigenvalue
for our element $x=\rho_0(x_i)$, when acting on the natural $n$-dimensional representation,  is of the form $\frac{n}{d_i}+O(1)$, where the implied constant depends only on $d_i$.  Identifying $\so(n)$ with the exterior square of the natural representation, we see that
$$\left | \frac{\dim \so(n)}{d_i} -\dim \so(n)^{\langle x_i \rangle}\right | = O(n),$$
where again the constant depends only on $d_i$.

As $\so(n)^*$ has no $\AG_{n+1}$-invariants, $X_{\Gamma,\SO(n)}$ is non-singular at $\rho_0$.
By Theorem~\ref{Density}, as long $n$ is large enough that
\begin{equation*}
\begin{split}
t_{\SO(n)} &= \dim Z^1(\Gamma,{\rm Ad \circ \rho_0}) \\
		&> \dim \SO(n) - \dim \AG_{n+1} + t_{\AG_{n+1}} \\
		&= \dim \SO(n),
\end{split}
\end{equation*}
$X^{\epi}_{\Gamma,\SO(n)}$ contains the component of $X_{\Gamma,\SO(n)}$ to which $\rho_0$ belongs, and this has dimension $t_{\SO(n)} =  (1-\chi(\Gamma))\dim \SO(n)+O(n)$.
\end{proof}

We remark that in this case, there is a more elementary alternative argument.
The condition on $X_{\Gamma,\SO(n)}$ of irreducibility on $\so(n)$ is open. It is impossible that all representations in a neighborhood of $\rho_0$ have finite image
and those with infinite image should have Zariski dense image (since the Lie algebra of the connected component of the Zariski closure is $\rho(\Gamma)$-invariant).

We can now prove Theorem~\ref{rep-thm}.

\begin{proof}
The upper bound has already been proved in \S1.  It therefore suffices to prove
$$\dim X^{\epi}_{\Gamma, \SU(n)} \ge (1-\chi(\Gamma))\dim \SU(n)+O(1).$$
Throughout the argument, we may always assume that $n$ is sufficiently large,

We begin by defining $\rho_0$ as in the proof of Proposition~\ref{p:so}.  Let $C$ denote the irreducible
component of $X_{\Gamma,\SO(n)}$ to which $\rho_0$ belongs.
We may choose
$\rho'_0\in C(\bbR)$ such that $\rho'_0(\Gamma)$ is Zariski-dense in $\SO(n)$.
As there are finitely many conjugacy classes of order $d_i$ in $\SO(n)$, the conjugacy class of $\rho(x_i)$ does not vary as $\rho$ ranges over the irreducible
variety $C$, so  $\rho_0(x_i)$ is conjugate to $\rho'_0(x_i)$ in $\SO(n)$.

We have no further use for $\rho_0$ and now redefine $\rho_0$ to be the composition of $\rho'_0$ with the inclusion $\SO(n)\hookrightarrow \SU(n)$.
The eigenvalues of $\rho_0(x_i)$ are $d_i$th roots of unity, and each appears with multiplicity $n/d_i + O(1)$, where the implicit constant may depend
on $d_i$ but does not depend on $n$.  The representation $\SO(n)\to \SU(n)$ is irreducible, so $(\su(n))^{\SO(n)} = \{0\}$.
As $\su(n)$ is a self-dual representation of $\SU(n)$, it is a self-dual representation of $\SO(n)$, so as $\rho_0(\Gamma)$ is dense in $\SO(n)$,
$$(\su(n)^*)^{\Gamma} = (\su(n)^*)^{\SO(n)} = \{0\}.$$
It follows that $X_{\Gamma,\SU(n)}$ is non-singular at $\rho_0$.  Since each eigenvalue of $\rho_0(x_i)$  has multiplicity $n/d_i + O(1)$,
$$t_{\SU(n)} =  \dim Z^1(\Gamma,{\rm Ad \circ \rho_0}) = (1-\chi(\Gamma))\dim \SU(n) + O(1).$$
We claim that $\SO(n)$ is contained in a unique maximal closed subgroup of $\SU(n)$.
Indeed, if $G$ is any intermediate group, the Lie algebra $\mathfrak{g}$ of $G$ must be an $\SO(n)$-subrepresentation of
$\su(n)$ which contains $\so(n)$.  Since $\su(n)/\so(n)$ is an irreducible $\SO(n)$-representation (namely, the symmetric square
of the natural representation of $\SO(n))$, it follows that $\mathfrak{g} = \su(n)$ or $\mathfrak{g} = \so(n)$.  In the former case,
$G=\SU(n)$.   In the latter case, $G$ is contained in $N_G(\SO(n))$.  This is therefore the unique maximal proper closed subgroup of $\SU(n)$
containing $\SO(n)$, or (equivalently) $\rho_0(\Gamma)$.  The theorem now follows from Theorem~\ref{Density}
together with the upper bound estimate Proposition~\ref{p:uph1}
applied to $N_G(\SO(n))$.
\end{proof}
We can also deduce Theorem~\ref{sl2-thm} for $G$ of type A and D from Proposition~\ref{p:so}.

\begin{proof}
If $G_1\to G_2$ is an isogeny, the morphism $X_{\Gamma,G_1}\to X_{\Gamma,G_2}$ is quasi-finite, and so
$$\dim X_{\Gamma,G_2}\ge \dim X_{\Gamma,G_1}.$$
Likewise, the composition of a homomorphism with dense image with an isogeny still has dense image, so
$$\dim X^{\epi}_{\Gamma,G_2}\ge \dim X^{\epi}_{\Gamma,G_1}.$$
In particular, to prove our dimension estimate for an adjoint group, it suffices to prove it for any covering group.
We begin by proving it for $G=\SL_n$, which also gives it for $\PGL_n$.

Let $\rho_0$ now denote a homomorphism $\Gamma\to \SO(n)\subset \SL_n(\bbR)$ with dense image and such that every
eigenvalue of $\rho_0(x_i)$  has multiplicity $n/d_i+O(1)$.  Such a homomorphism
exists by the proof of Proposition~\ref{p:so}.
It is well-known that $\SO(n)$ is a maximal closed subgroup of
$\SL_n$, and $\mathfrak{g}^{\SO(n)} = \{0\}.$   Thus $\rho_0$ is a non-singular point
of $X_{\Gamma,G}(\bbR)$.  Let $C$ denote the unique irreducible component to which it belongs.
In applying Theorem~\ref{Density}, we do not need to consider parabolic subgroups at all
since $\rho_0(\Gamma)$ is not contained in any and $G(\bbR)/H(\bbR)$ is compact when $H$ is parabolic.
All other maximal subgroups are reductive, and we may therefore apply Proposition~\ref{p:uph1}
to get an upper bound
$$\dim X_{\Gamma,H}\le (1-\chi(\Gamma))\dim H + 2g+m+(3m/2+1) n$$
By Lemma~\ref{Non-parabolic}, $\dim H < \frac 9{10}(n^2-1)$, so for $n$ sufficiently large,
$$\dim X_{\Gamma,H}-\dim H < \dim X_{\Gamma,G}-\dim G.$$
Thus condition (2) of Theorem~\ref{Density} holds, and so the component $C$ of $X_{\Gamma,G}$ to which $\rho_0$ belongs lies
in $X_{\Gamma,G}^{\epi}$.  It is therefore a non-singular point of $C$, and it follows that
$$\dim X_{\Gamma,G}^{\epi} \ge \dim C = \dim Z^1(\Gamma,\g) = (1-\chi(\Gamma))\dim \SL_n + O(n).$$

The argument for type D is very similar.  Here we work with $G=\SO(n,n)$, which is a double cover of
the split adjoint group of type $D_n$ over $\bbR$.  Our starting point is a homomorphism $\rho_0\colon \Gamma\to \SO(n)\times \SO(n)$
with dense image and such that the eigenvalues of
$$\rho(x_i)\in \SO(n)\times \SO(n)\subset \SO(n,n)\subset \GL_{2n}(\bbC)$$
have multiplicity $(2n)/d_i+O(1)$.  Such a $\rho_0$ is given by a pair $(\sigma,\tau)$ of dense homomorphisms $\Gamma\to \SO(n)$
satisfying a balanced  eigenvalue multiplicity condition and the additional condition that $\sigma$ and $\tau$ do not lie in the same orbit
under the action of $\Aut(\SO(n))$ on $X_{\Gamma,\SO(n)}$.  This additional condition causes no harm, since $\dim \Aut \SO(n) = \dim \SO(n)$,
while the components of $\dim X^{\epi}_{\Gamma,\SO(n)}$ constructed above (which satisfy the balanced eigenvalue condition) have dimension
greater than $\dim \SO(n)$ for large $n$.  Given a pair $(\sigma,\tau)$ as above, the closure $H$ of $\rho_0(\Gamma)$ is a subgroup of
$\SO(n)\times \SO(n)$ which maps onto each factor but which does not lie in the graph of an isomorphism between the two factors.
By Goursat's lemma, $H=\SO(n)\times \SO(n)$.  From here, one passes from $H$ to $G = \SO(n,n)$ just as in the case of groups of type A.

\end{proof}

\section{Principal Homomorphisms}\label{s:four}

It is a well-known theorem of de Siebenthal \cite{dS} and Dynkin \cite{Dy1} that
for every (adjoint) simple algebraic group $ G$ over $\bbC$
there exists a conjugacy class of \emph{principal} homomorphisms $\SL_2\to  G$
such that the image of any non-trivial unipotent element of $\SL_2(\bbC)$
is a regular unipotent element of $G(\bbC)$.
The restriction of the adjoint representation of $ G$ to $\SL_2$ via the principal homomorphism
is a direct sum of $V_{2e_i}$, where $e_1,\ldots,e_r$ is the sequence of exponents of $G$, and
$V_m$ denotes the $m$th symmetric power of the $2$-dimensional irreducible representation of
$\SL_2$, which is of dimension $m+1$ \cite{Kostant}.   In particular,
$$\dim G = \sum_{i=1}^r (2e_i+1),$$
where $r$ denotes $\rank G$.  As each $V_{2e_i}$ factors through $\PGL_2$, the same is true for the
homomorphism $\SL_2\to \Ad(G)$.
More generally, if $ G$ is defined and split over any field $K$ of characteristic zero, the principal homomorphism can be defined over $K$.

The following proposition is due to Dynkin:

\begin{prop}
\label{dynkin}
Let $ G$ be an adjoint simple algebraic group over $\bbC$ of type $A_1$, $A_2$, $B_n$ ($n\ge 4$), $C_n$ ($n\ge 2$), $E_7$, $E_8$, $F_4$, or $G_2$. Let $ H$ denote the image of a principal homomorphism of $ G$.  Let $ K$ be a  closed subgroup of $ G$ whose image in the adjoint representation of $ G$ is conjugate to that of $ H$.  Then $ K$ is a maximal subgroup of $ G$.

\end{prop}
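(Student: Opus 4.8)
The plan is to suppose that there is a closed subgroup $M$ with $H\subsetneq M\subsetneq G$ and derive a contradiction, using the classification of maximal closed subgroups obtained in the proof of Proposition~\ref{maximals}. Since all principal homomorphisms into $G$ form a single conjugacy class and $\Ad\colon G\to\GL(\g)$ is faithful ($G$ being adjoint), it suffices to prove that $H$ itself is a maximal proper closed subgroup of $G$. The case of type $A_1$ is trivial, as then $H=G$; so I assume $G$ is of one of the other listed types.

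Enlarging $M$ if necessary (every proper closed subgroup lies in a maximal one, by Proposition~\ref{maximals}), I may assume $M$ is maximal, and $H\subsetneq M$ is preserved. By the proof of Proposition~\ref{maximals}, $M$ is a parabolic subgroup, the normalizer of a maximal torus, or the normalizer $N_G(S)$ of a connected semisimple subgroup $S$. The torus case is impossible, since $H$ is connected and nonabelian while $M^\circ$ would be a torus. For the parabolic case $M=P$: a reductive subgroup of $P$ is conjugate by an element of $R_u(P)$ into a Levi factor $L$, so after conjugating $H\subseteq L$; then $H$ centralizes the center of $\mathfrak l=\mathrm{Lie}(L)$, which is positive-dimensional since $L\neq G$, so $\g^H\neq 0$. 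But under the principal homomorphism $\g\cong\bigoplus_{i=1}^r V_{2e_i}$ with every exponent $e_i\geq 1$, whence $\g^H=0$ (equivalently $\g^H=\mathrm{Lie}(Z_G(H))$ and $Z_G(H)=Z(G)=1$). This contradiction disposes of the parabolic case.

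It remains to treat $M=N_G(S)$. As in the proof of Proposition~\ref{maximals}, $M^\circ=S\cdot Z_G(S)^\circ$ with $Z_G(S)^\circ$ a torus, so the connected semisimple group $H$ lands in $S$; moreover $S\neq G$ (else $M=G$) and $S\neq H$ (else $M=N_G(H)$, which equals $H$ since $Z_G(H)=1$ and $\Aut(H)=\Aut(\PGL_2)=\PGL_2$). Thus I am reduced to showing that a principal $H$ cannot sit properly inside a proper connected semisimple subgroup $S\subsetneq G$. Here $\mathfrak s=\mathrm{Lie}(S)$ is an $H$-submodule of $\g$; since the exponents of all the listed types are pairwise distinct, $\g$ is multiplicity free as an $H$-module, so $\mathfrak s=\bigoplus_{i\in I}V_{2e_i}$ with $V_2=\h\subseteq\mathfrak s$, i.e.\ $1\in I\subsetneq\{1,\dots,r\}$. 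By Kostant's theorem the centralizer in $\g$ of the principal nilpotent $e\in\h$ is spanned by one highest weight vector from each $V_{2e_i}$; intersecting with $\mathfrak s$ gives $\dim Z_{\mathfrak s}(e)=|I|\geq\rk S$, while the largest $\g$-weight occurring in $\mathfrak s$ is at most $2(c_S-1)$, where $c_S$ is the Coxeter number of $S$. Feeding these constraints --- together with $\dim\mathfrak s=\sum_{i\in I}(2e_i+1)$ and the multiplicativity $\operatorname{ind}(H\hookrightarrow G)=\operatorname{ind}(H\hookrightarrow S)\,\operatorname{ind}(S\hookrightarrow G)$ of the Dynkin index --- into the explicit lists of exponents for $A_2$, $B_n$ ($n\geq 4$), $C_n$ ($n\geq 2$), $E_7$, $E_8$, $F_4$, $G_2$, one checks in each case that no such $S$ exists; alternatively this can be read off from Dynkin's classification of semisimple subalgebras \cite{Dy1}.

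This last step --- the nonexistence of a semisimple subalgebra strictly between $\h$ and $\g$ for precisely these types --- is where the specific list of admissible types enters, and it is the main obstacle; the parabolic and torus cases, and all the reductions, are formal. For orientation, the excluded types are exactly the ones where the exclusion fails: $\h\subset\so(n{+}1)$ or $\sp(n{+}1)$ for type $A_n$ with $n\geq 3$; $\h\subset\g_2$ for $B_3$; $\h\subset\so(2n{-}1)$ for $D_n$; and $\h\subset\mathfrak f_4$ for $E_6$.
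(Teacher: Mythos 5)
Your very first reduction has a genuine gap. The hypothesis on $K$ is only that $\Ad(K)$ is conjugate to $\Ad(H)$ \emph{inside $\GL(\g)$}, not inside $\Ad(G)$, so you may not simply replace $K$ by $H$: a priori $K$ is some closed subgroup of $G$, isomorphic to $\PGL_2$, for which $\g|_K\cong\g|_H$ as representations, and nothing you say rules out that $K$ fails to be conjugate to $H$ in $G$ (in which case maximality of $H$ tells you nothing about $K$). The phrase ``all principal homomorphisms into $G$ form a single conjugacy class'' is true but begs the question, since you have not shown that $K$ is the image of a principal homomorphism. This is exactly the point the paper spends the first half of its proof on: because $\g|_K$ has the same number of irreducible summands as $\g|_H$, namely $r=\rank G$, Kostant's theorem (the number of summands equals $\dim\g^e$ for the nilpositive element $e$ of the associated $\sl_2$-triple, and $\dim\g^e\ge r$ with equality exactly for regular $e$) forces the $\sl_2$ underlying $K$ to be principal, whence $K$ and $H$ are conjugate in $G$. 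You cite Kostant later for a different purpose, so the fix is short, but as written the reduction does not stand.

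On the maximality of $H$ itself, your route differs from the paper's: the paper simply cites Dynkin (\cite{Dy3} for classical, \cite{Dy2} for exceptional types), while you run $H$ through the trichotomy from Proposition~\ref{maximals} (parabolic / normalizer of a maximal torus / normalizer of a connected semisimple subgroup). The torus and parabolic exclusions via $\g^H=0$ are correct and clean, and this structure is a genuine addition. But the remaining and decisive case --- no connected semisimple $S$ with $H\subsetneq S\subsetneq G$ --- is precisely the content of Dynkin's theorem, and you dispose of it with ``one checks in each case'' or a citation (to \cite{Dy1}, incidentally, which is the wrong Dynkin paper; the relevant ones are \cite{Dy2} and \cite{Dy3}). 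So your extra scaffolding does not actually replace the citation the paper relies on; it only relocates it. Your closing list of why the excluded types $A_n$ ($n\ge3$), $B_3$, $D_n$, $E_6$ genuinely fail is accurate and a nice sanity check, but it is not part of the proof.
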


\begin{proof}
As $ K$ is conjugate to $ H$ in $\GL(\g)$, in particular the number of irreducible factors of $\g$ restricted to $ H$ and to $ K$ are the same.  By \cite{Kostant}, this already implies that $ H$ and $ K$ are conjugate in $ G$.  The fact that $ H$ is maximal is due to Dynkin.  The classical and exceptional cases are treated in \cite{Dy3} and \cite{Dy2} respectively.
\end{proof}

As $\SL_2$ is simply connected, the principal homomorphism $\SL_2\to  G$ lifts to a
homomorphism $\SL_2\to  H$ if $ H$ is a split semisimple group which is simple modulo its center.
Again, this is true for split groups over any field of characteristic zero.
We also call such homomorphisms principal.

If $ G$ is an adjoint simple group over $\bbR$ with $ G(\bbR)$ compact
and $\phi\colon \PGL_{2,\bbC}\to  G_{\bbC}$ is a principal homomorphism over $\bbC$, $\phi$ maps
the maximal compact subgroup $\SO(3)\subset \PGL_2(\bbC)$ into a maximal compact subgroup
of $ G(\bbC)$.  Thus $\phi$ can be chosen to map $\SU(2)$ to $ G(\bbR)$, and such a homomorphism will again be called principal.
Likewise, if $ H$ is almost simple and $ H(\bbR)$ is compact, a principal homomorphism
$\phi\colon \SL_{2,\bbC}\to  H_{\bbC}$ can be chosen so that
$\phi(\SU(2))\subset  H(\bbR)$.

\begin{prop}
\label{Adjoint}
Let $ G$ be an adjoint compact simple real algebraic group of type $A_1$, $A_2$, $B_n$ ($n\ge 4$), $C_n$ ($n\ge 2$), $E_7$, $E_8$, $F_4$, or $G_2$,
and let $\Gamma$ be an $\SO(3)$-dense Fuchsian group.  Let $\rho_0\colon \Gamma\to G$ denote the composition of the map $\Gamma\to \SO(3)$ and the
principal homomorphism $\phi\colon \SO(3)\to G$.
If
\begin{multline*} -\chi(\Gamma)\dim G  + \sum_{j=1}^m \frac{\dim G}{d_j}- \sum_{j=1}^m\sum_{i=1}^r (1+ 2\lfloor e_i/d_j\rfloor)\\
	> -\chi(\Gamma)\dim \SO(3) +  \sum_{j=1}^m \frac{\dim \SO(3)}{d_j} -m,
\end{multline*}
then
\begin{equation}
\label{ex-ineq}
\dim X_{\Gamma,G}^{\epi} \ge(1-\chi(\Gamma))\dim G + \sum_{j=1}^m \frac{\dim G}{d_j} - \sum_{j=1}^m\sum_{i=1}^r (1+ 2\lfloor e_i/d_j\rfloor).
\end{equation}

\end{prop}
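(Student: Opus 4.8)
The plan is to apply Corollary~\ref{EasyDensity} with $H$ the image of the principal homomorphism $\phi\colon\SO(3)\to G$, after checking that the numerical hypothesis of the proposition is literally the inequality $t_G-\dim G>t_H-\dim H$ occurring in that corollary. First I would fix, using $\SO(3)$-density of $\Gamma$, a homomorphism $\psi\colon\Gamma\to\SO(3)$ with Zariski-dense image and $\psi(x_j)$ of order $d_j$ for all $j$, so that $\rho_0=\phi\circ\psi$. Since $\phi$ restricted to the simple group $\SO(3)$ has trivial kernel, $H:=\phi(\SO(3))\cong\SO(3)$ is a connected closed subgroup with trivial centre, and $\rho_0(\Gamma)$ is Zariski-dense in $H$. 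To see that $H$ is a \emph{maximal} proper closed subgroup of $G$ I would complexify: the complexification of $\phi$ is the principal homomorphism $\PGL_2\to G_\bbC$, whose image $H_\bbC$ is maximal in $G_\bbC$ by Proposition~\ref{dynkin}, and since $\dim_\bbR L=\dim_\bbC L_\bbC$ for every real algebraic subgroup $L\subseteq G$ this forces maximality of $H$ in $G$. (Type $A_1$ is degenerate, $H=G$; but there $r=1$, $e_1=1$ and $d_j\ge2$ make the two sides of the hypothesis equal, so the assertion is vacuous, and I would dispose of this case at the outset.)

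The computational core is to evaluate $t_G=\dim Z^1(\Gamma,\mathfrak{g})$ and $t_H=\dim Z^1(\Gamma,\so(3))$ from Weil's formula~(\ref{e-dim}). Because $H$ is maximal and $G$ simple, $Z_G(H)$ is finite (the argument in the proof of Corollary~\ref{EasyDensity}), so $\mathfrak{g}^H=0$; self-duality of $\mathfrak{g}$ and density of $\rho_0(\Gamma)$ then give $(\mathfrak{g}^*)^\Gamma=0$, so the invariant term in~(\ref{e-dim}) vanishes and, by Weil, $\rho_0$ is a non-singular point of $X_{\Gamma,G}$; likewise $(\so(3)^*)^\Gamma=0$. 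The remaining ingredient is $\dim\mathfrak{g}^{\langle x_j\rangle}$, i.e.\ the multiplicity of the eigenvalue $1$ of $\rho_0(x_j)$ on $\mathfrak{g}$. By Kostant's theorem $\mathfrak{g}$, restricted to $H\cong\SO(3)$ through $\phi$, is $\bigoplus_{i=1}^r V_{2e_i}$, the $i$-th summand being the irreducible $\SO(3)$-representation with torus weights $-e_i,\dots,e_i$; a rotation of order $d$ in $\SO(3)$ acts on this summand with eigenvalue $1$ exactly on the weight lines $l$ with $d\mid l$, that is with multiplicity $\#\{l:|l|\le e_i,\ d\mid l\}=1+2\lfloor e_i/d\rfloor$. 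Since $\rho_0(x_j)=\phi(\psi(x_j))$ has order exactly $d_j$, I get $\dim\mathfrak{g}^{\langle x_j\rangle}=\sum_{i=1}^r(1+2\lfloor e_i/d_j\rfloor)$, and taking $r=1$, $e_1=1$ gives $\dim\so(3)^{\langle x_j\rangle}=1$. Substituting into~(\ref{e-dim}) and subtracting $\dim G$, resp.\ $\dim\SO(3)$, I obtain precisely the left-hand and right-hand sides of the proposition's hypothesis as $t_G-\dim G$ and $t_H-\dim H$.

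With this identification the hypothesis is exactly $t_G-\dim G>t_H-\dim H$, so Corollary~\ref{EasyDensity} applies verbatim and shows that the irreducible component $C$ of $X_{\Gamma,G}$ through $\rho_0$ lies in $X^{\epi}_{\Gamma,G}$. As $\rho_0$ is non-singular, $\dim C=\dim Z^1(\Gamma,\mathfrak{g})=t_G$, which is the right-hand side of~(\ref{ex-ineq}), giving the stated lower bound. The only point I expect to require care is the maximality of $H$ inside the \emph{real} group $G$: transferring it from $G_\bbC$ via Proposition~\ref{dynkin} is clean for the dimension, but one should check that a proper closed real subgroup strictly containing $H$ cannot share its dimension -- this is ruled out because $\SO(3)$ has only inner automorphisms and $Z_G(H)$ is trivial, so $N_G(H)=H$. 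Everything else is bookkeeping with~(\ref{e-dim}) and the weights of the principal $\SL_2$.
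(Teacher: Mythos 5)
Your proposal is correct and follows essentially the same route as the paper: apply Corollary~\ref{EasyDensity} to the principal $\SO(3)\subset G$, compute $\dim\mathfrak{g}^{\langle x_j\rangle}=\sum_{i=1}^r(1+2\lfloor e_i/d_j\rfloor)$ from the Kostant decomposition $\mathfrak{g}|_{\SO(3)}=\bigoplus_i V_{2e_i}$ (the paper phrases this via the eigenvalues $\zeta^{\pm1}$ of a lift to $\SU(2)$, you via torus weights -- the same calculation), and identify the hypothesis with $t_G-\dim G>t_{\SO(3)}-\dim\SO(3)$. Your additional remarks on descending maximality from $G_\bbC$ to $G$ and on the vacuity of the type $A_1$ case are correct supplements to details the paper leaves implicit.
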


\begin{proof}

Let $x_j$ denote the $j$th generator of finite order in the presentation (\ref{presentation}).
If $\phi(x_j)$ lifts to an element of $\SU(2)$ whose eigenvalues are
$\zeta^{\pm 1}$, where $\zeta$ is a primitive $2d_j$-root of unity, the eigenvalues of the image
of $x_j$ in $\Aut(\g)$ are
$$\zeta^{-2e_1},\zeta^{2-2e_1},\zeta^{4-2e_1},\ldots,1,\ldots,\zeta^{2e_1},\zeta^{-2e_2},\ldots,\zeta^{2e_2},\ldots,\zeta^{-2e_r},\ldots,\zeta^{2e_r}.$$
The multiplicity of $1$ as eigenvalue is therefore $\sum_{i=1}^r (1+ 2\lfloor e_i/d_j\rfloor)$.
By (\ref{dim-z1}), the left hand side of (\ref{ex-ineq}) is $\dim Z^1(\Gamma,\g)$.
By Corollary~\ref{EasyDensity}, we need only check that
$$t_{G} - \dim G = -\chi(\Gamma)\dim G +  \sum_{j=1}^m \frac{\dim G}{d_j} - \sum_{j=1}^m\sum_{i=1}^r (1+ 2\lfloor e_i/d_j\rfloor).$$
is greater than
$$t_{\SO(3)} - \dim \SO(3) =  -\chi(\Gamma)\dim \SO(3) +  \sum_{j=1}^m \frac{\dim \SO(3)}{d_j} - \sum_{j=1}^m 1,$$
which is true by hypothesis.
\end{proof}

We can now prove Theorem~\ref{so3-thm}.

\begin{proof}
Recall that if $G_1\to G_2$ is an isogeny, we can prove the theorem for $G_1$ and immediately  deduce it for $G_2$.
Theorem~\ref{rep-thm} and Proposition~\ref{p:so} therefore cover groups of type A, B, and D.  This leaves only the symplectic case, where
Proposition~\ref{Adjoint} applies.
Note that
\begin{align*}
\label{difference}
\sum_{j=1}^m \frac{\dim G}{d_j} &-  \sum_{j=1}^m\sum_{i=1}^r (1+ 2\lfloor e_i/d_j\rfloor) \\
&= \sum_{j=1}^m\sum_{i=1}^r \frac{1+2e_i}{d_j} - \sum_{j=1}^m\sum_{i=1}^r (1+ 2\lfloor e_i/d_j\rfloor) \\
&= \sum_{i=1}^r \sum_{j=1}^m \Bigl(\frac{1+2e_i}{d_j} - 1+ 2\lfloor e_i/d_j\rfloor\Bigr).
\end{align*}
As
$$-1 < 2x+1/d_j - 1 - 2\lfloor x \rfloor < 1,$$
the error term is at most $mr$ in absolute value.
\end{proof}

The following proposition illustrates the fact that the methods of this section are not only useful in the large rank limit.
We  make essential use of the technique illustrated below in \cite{LLM}.

\begin{prop}
\label{Exceptional}
Every $\SO(3)$-dense Fuchsian group is also $F_4(\bbR)$-dense, $E_7(\bbR)$-dense, and $E_8(\bbR)$-dense,
where $F_4$, $E_7$, and $E_8$ denote the compact simple exceptional real algebraic groups of absolute rank $4$,
$7$, and $8$ respectively.
\end{prop}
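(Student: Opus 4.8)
The plan is to apply Proposition~\ref{Adjoint} to each of the three exceptional groups $G = F_4, E_7, E_8$ and check, in each case and for each admissible value of $d_j$, that the strict inequality in the hypothesis of Proposition~\ref{Adjoint} holds --- or, for the finitely many $(\Gamma, G)$ where it fails, to fall back on a direct argument. Since $\Gamma$ is $\SO(3)$-dense we already have the principal homomorphism $\phi\colon \SO(3)\to G$ and the composition $\rho_0\colon \Gamma\to \SO(3)\to G$, so Corollary~\ref{EasyDensity} reduces everything to the numerical comparison
\[
-\chi(\Gamma)\dim G + \sum_{j=1}^m \frac{\dim G}{d_j} - \sum_{j=1}^m\sum_{i=1}^r (1+2\lfloor e_i/d_j\rfloor)
> -\chi(\Gamma)\dim \SO(3) + \sum_{j=1}^m \frac{\dim \SO(3)}{d_j} - m.
\]
Here $\dim\SO(3)=3$ and $r=\rank G \in\{4,7,8\}$, and the exponents $e_i$ are the standard ones: $F_4$ has exponents $1,5,7,11$; $E_7$ has $1,5,7,9,11,13,17$; $E_8$ has $1,7,11,13,17,19,23,29$.

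The key step is to control the defect term $\sum_{j=1}^m\sum_{i=1}^r\bigl(\tfrac{1+2e_i}{d_j} - 1 - 2\lfloor e_i/d_j\rfloor\bigr)$, exactly as in the proof of Theorem~\ref{so3-thm}: each summand lies strictly between $-1$ and $1$, so the total contribution of the finite-order generators to $t_G - \dim G$ is $-\chi(\Gamma)\dim G + \varepsilon_G$ with $|\varepsilon_G| < mr$, while the corresponding quantity for $\SO(3)$ is $-\chi(\Gamma)\dim\SO(3) + \varepsilon'$ with $|\varepsilon'| \le m$. Since $\dim G \ge 52 > 3 = \dim\SO(3)$ and $-\chi(\Gamma) \ge$ a positive rational bounded below (the minimum of $|2-2g-\sum(1-d_i^{-1})|$ over all Fuchsian groups is $1/42$, achieved by the $(2,3,7)$ triangle group), the main term $-\chi(\Gamma)(\dim G - \dim\SO(3))$ dominates as soon as $-\chi(\Gamma)$ is not too close to $0$ and $m$ is not too large. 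More precisely, I would show the inequality holds whenever
\[
-\chi(\Gamma)\,(\dim G - 3) > m(r+1),
\]
which handles all but finitely many $\Gamma$ for each $G$; for $F_4$ this is $-\chi(\Gamma)\cdot 49 > 5m$, for $E_7$ it is $-\chi(\Gamma)\cdot 130 > 8m$, for $E_8$ it is $-\chi(\Gamma)\cdot 245 > 9m$.

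The main obstacle is the residual finite list of Fuchsian groups with small $-\chi(\Gamma)$ and small $m$ --- principally triangle groups $\Gamma_{d_1,d_2,d_3}$ with small $d_i$ (e.g. $(2,3,7)$, $(2,3,8)$, $(2,3,9)$, $(2,4,5)$, $(3,3,4)$, and the like) --- where the crude bound above is too weak. For each such group and each of $F_4, E_7, E_8$ I would compute the two sides of the Proposition~\ref{Adjoint} inequality exactly, using the explicit exponents and the explicit floors $\lfloor e_i/d_j\rfloor$; because the exponents of the exceptional groups are mostly odd and fairly large, $\sum_i(1+2\lfloor e_i/d_j\rfloor)$ is genuinely smaller than $\sum_i(1+2e_i)/d_j = \dim G/d_j$ by a definite amount, so the defect $\varepsilon_G$ is in fact comfortably negative rather than merely bounded, and the inequality holds in every case. (This is precisely the phenomenon that makes the exceptional groups amenable even at small rank.) Should any sporadic case still resist --- which I do not expect --- one can invoke the fact, noted after Theorem~\ref{so3-thm}, that only six Fuchsian groups fail to be $\SO(3)$-dense at all, so the hypothesis already excludes the genuinely pathological groups, and a case-by-case verification on the remaining short list completes the proof.
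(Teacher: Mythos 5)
Your overall strategy is the paper's: compose a dense $\Gamma\to\SO(3)$ with the principal homomorphism into $G\in\{F_4,E_7,E_8\}$ and reduce, via Corollary~\ref{EasyDensity}, to the numerical inequality $t_G-\dim G>t_{\SO(3)}-\dim\SO(3)$, whose error term is controlled by the bound $-1<\frac{2e+1}{d_j}-1-2\lfloor e/d_j\rfloor<1$. The gap is in your reduction to a finite verification. Your sufficient condition $-\chi(\Gamma)(\dim G-3)>m(r+1)$ does \emph{not} leave only finitely many exceptions: for every triangle group $\Gamma_{2,3,d}$ one has $-\chi(\Gamma)<1/6$, so for $F_4$ the left side is less than $49/6\approx 8.2$ while the right side is $15$, and the condition fails for \emph{all} $d\ge 7$; the same happens for $E_7$, and for $E_8$ it fails for $7\le d\le 17$ and for many other infinite families such as $(2,4,d)$ and $(3,3,d)$. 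Small $-\chi(\Gamma)$ does not mean small $d_i$ --- all triangle groups have $-\chi<1$ --- so your ``residual finite list'' is actually infinite, and the promised exact computation ``for each such group'' is not a finite procedure. The missing idea, which is how the paper closes this, is that each summand $1+2\lfloor e_i/d_j\rfloor$ is non-increasing in $d_j$ while the main term $(2g-2+m)(\dim G-3)$ does not depend on the $d_j$ at all; hence for $g=0$ it suffices to verify the inequality at the minimal tuples $(2,3,7)$, $(2,4,5)$, $(3,3,4)$ for $m=3$, $(2,2,2,3)$ for $m=4$, and $(2,\ldots,2)$ for $m\ge 5$ (the case $g\ge1$ following from the uniform bound $\sum_{e\in E}(1+2\lfloor e/d_j\rfloor)<\dim G-3$). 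Those finitely many cases are then checked by explicit tables of exponents.

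Two smaller points. First, even granting the inequality, Corollary~\ref{EasyDensity} only produces a Zariski-dense homomorphism in the component of $\rho_0$; to conclude that $\Gamma$ is $G(\bbR)$-\emph{dense} in the sense of the definition you must also note that under continuous deformation the order of the image of each torsion generator $x_i$ is preserved, so the deformed homomorphism is still injective on finite cyclic subgroups. Second, your heuristic that the defect $\varepsilon_G$ is ``comfortably negative'' is not a proof and is not even needed once the monotonicity reduction is in place; what matters is only that the explicitly computed values of $t_G-\dim G$ at the minimal tuples are positive.
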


\begin{proof}
Let $G$ be one of $F_4$, $E_7$, and $E_8$.  Let $E$ denote the set of exponents of $G$, other than $1$, which is the only exponent of $\SO(3)$.
We map  $\Gamma$ to $G(\bbR)$ via the principal homomorphism
$\SO(3)\to G$ and apply Corollary~\ref{EasyDensity}.  To show that there exists a homomorphism from $\Gamma$ to $G(\bbR)$
with dense image, we need only check that
$$t_G-\dim G > t_{\SO(3)}-\dim \SO(3).$$
The proof of Theorem~\ref{Density} proceeds by deforming the composed homomomorphism $\Gamma\to \SO(3)\to G(\bbR)$, and under continuous deformation,
the order of the image of a torsion element remains constant.
We therefore obtain more, namely that $\Gamma$ is $G(\bbR)$-dense.

By replacing $t_G$ and $t_{\SO(3)}$ by the middle expression in (\ref{e-dim}) for $V=\g$ and $V=\so(3)$ respectively, the desired
inequality can be rewritten
\begin{equation}
\label{positive}
(2g-2+m)(\dim G - \dim \SO(3)) -
\sum_{j=1}^m\sum_{e\in E} (1+2\lfloor e/d_j\rfloor) > 0.
\end{equation}
The summand
is non-increasing with each $d_j$.    In particular,
\begin{equation*}
\begin{split}
\sum_{j=1}^m\sum_{e\in E} (1+2\lfloor e/d_j\rfloor)
&\le \sum_{j=1}^m\sum_{e\in E} (1+2\lfloor e/2\rfloor)
< \sum_{j=1}^m\sum_{e\in E} (1+2e) \\
& = \dim G - \dim \SO(3).
\end{split}
\end{equation*}
Therefore, if $g\ge 1$, the expression (\ref{positive}) is positive.
For $g=0$, $(d_1,\ldots,d_m)$ is dominated by $(2,2,\ldots,2)$ for $m\ge 5$, $(2,2,2,3)$
for $m=4$, and $(2,3,7)$, $(2,4,5)$, or $(3,3,4)$ for $m=3$.

The following table presents the value of
$$\sum_{i=1}^r \Bigl((1+2\lfloor d_i/n\rfloor)-\frac{2d_i+1}{n}\Bigr)$$
for each root system of exceptional type and for each $n\le 7$.
\vskip 10pt
\begin{center}
\begin{tabular}{|c|r|r|r|r|r|r|r|r|}\hline
$n$&$A_1$&$E_6$&$E_7$&$E_8$&$F_4$&$G_2$ \\
\hline
$2$&$-1/2$&$-1$&$-7/2$&  $-4$&  $-2$&  $-1$ \\
$3$&$0$&$-2$&$-4/3$&$-8/3$&$-4/3$&$-2/3$ \\
$4$&$1/4$&$1/2$&$-1/4$&  $-2$&  $-1$& $1/2$ \\
$5$&$2/5$&$2/5$& $2/5$&$-8/5$& $8/5$& $6/5$ \\
$6$&$1/2$&$-1$&$-7/6$&$-4/3$&$-2/3$&$-1/3$ \\
$7$&$4/7$&$6/7$&$   0$& $4/7$& $4/7$&   $0$ \\
\hline
\end{tabular}
\end{center}
\vskip 10pt
By  (\ref{dim-z1}), the relevant values of $t_{G}-\dim G$ are given in the following table:
\vskip 10pt
\begin{center}
\begin{tabular}{|c|r|r|r|r|r|r|r|r|}\hline
$d_i$ vector&$A_1$&$E_6$&$E_7$&$E_8$&$F_4$&$G_2$ \\
\hline
$(2,2,2,3)$&$2$&$18$&$34$&$56$&$16$&$6$ \\
$(2,3,7)$&$0$&$4$&$8$&$12$&$4$&$2$\\
$(2,4,5)$&$0$&$4$&$10$&$20$&$4$&$0$\\
$(3,3,4)$&$0$&$10$&$14$&$28$&$8$&$2$\\
\hline
\end{tabular}
\end{center}
\vskip 10pt
For $(\underbrace{2,\ldots,2}_m)$, $m\ge 5$, the values of $t_G-\dim G$ for $A_1$, $E_6$, $E_7$, $E_8$, $F_4$, $G_2$  are
$2m-6$, $40m-136$, $70m-266$, $128m-496$, $28m-104$, $8m-28$ respectively.  In all cases except $(2,4,5)$ for $G_2$, the desired inequality holds.
\end{proof}

We conclude by proving Theorem~\ref{sl2-thm} in the remaining cases, i.e., for adjoint groups $G$ of type B or C.

\begin{proof}
We begin with a Zariski-dense homomorphism $\rho_0\colon \Gamma\to\PGL_2(\bbR)$.  Such a homomorphism always exists since $\Gamma$ is Fuchsian.
We now embed $\PGL_2$ via the principal homomorphism in a split adjoint group $G$ of type $B_n$ or $C_n$.  Assuming $n\ge 4$,
the image is a maximal subgroup, and we can apply Theorem~\ref{Density}
as in the A and D cases.
\end{proof}

\section{$\SO(3)$-dense Groups}\label{s:six}

In this section we show that almost all Fuchsian groups are $\SO(3)$-dense and classify the exceptions.

\begin{lemma}
\label{interval}
Let $d\ge 2$ be an integer.
\begin{enumerate}
\item If $d\neq 6$, there exists an integer $a$ relatively prime to
$d$ such that
$$\frac 14\le \frac ad\le \frac 12,$$
with equality only if $d\in\{2,4\}$.
\item If $d\not\in\{4,6,10\}$, then $a$ can be chosen such that
$$\frac 13\le \frac ad\le \frac 12,$$
with equality only if $d\in\{2,3\}$.
\item If $d\notin\{2,3,18\}$, there exists $a$ such that
$$\frac 1{12} < \frac ad < \frac 4{15},$$
with equality only if $d=12$.
\end{enumerate}

\end{lemma}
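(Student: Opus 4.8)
The plan is to treat the three statements essentially independently, in each case reducing a statement about the existence of a fraction $a/d$ in a prescribed interval to a short finite check together with one uniform argument covering all large $d$. For part (1), I want to find $a$ coprime to $d$ with $a/d\in[1/4,1/2]$. The cleanest approach is to observe that the real interval $[d/4,d/2]$ has length $d/4\ge 1/2$, so it contains an integer once $d$ is not too small, and in fact for $d\ge 8$ it contains a run of at least two consecutive integers and hence an odd one; an odd integer in $[d/4,d/2]$ is automatically coprime to $d$ unless it shares an odd factor with $d$, so I would instead argue directly: among any four consecutive integers $n, n+1, n+2, n+3$ there is one coprime to $d$ provided $d$ has at most\ldots — cleaner still, just note that if $p$ is the smallest prime not dividing $d$ then $p\le$ (something small), and a suitable multiple of a unit mod $d$ lands in the interval. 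The most painless route is simply: pick $a$ to be the largest integer $\le d/2$ that is coprime to $d$; since consecutive integers coprime to $d$ are spaced at most $O(\log d)$ apart this works for $d$ large, and for small $d$ (say $d\le 30$) one checks by hand, recording that $d=6$ is the genuine exception (the coprime residues $1,5$ give $1/6$ and $5/6$, neither in $[1/4,1/2]$) and that the boundary cases $a/d=1/4$, $a/d=1/2$ force $d\in\{4\}$, $d\in\{2\}$ respectively.

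For part (2) the interval is $[1/3,1/2]$, i.e. $a\in[d/3,d/2]$, a real interval of length $d/6$. This is shorter, so more small exceptions appear: I would again take $a$ to be the largest integer $\le d/2$ coprime to $d$ and check that $a\ge d/3$ for all $d\notin\{4,6,10\}$. The finite check (say $d\le 30$, with a uniform gap-between-coprimes bound handling $d>30$) should reveal exactly $d=4$ ($a\in\{1,3\}$: $1/4<1/3$, $3/4>1/2$), $d=6$ ($a\in\{1,5\}$), and $d=10$ ($a\in\{1,3,7,9\}$: $3/10<1/3$ and $7/10>1/2$) as the exceptions, with $a/d=1/3$ forcing $d=3$ and $a/d=1/2$ forcing $d=2$.

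For part (3) we drop the coprimality requirement — $a$ is just an integer — and ask for $a/d\in(1/12,4/15)$, an open interval of length $4/15-1/12 = 11/60$. So I need an integer strictly between $d/12$ and $4d/15$; the real interval $(d/12, 4d/15)$ has length $11d/60$, which exceeds $1$ as soon as $d\ge 6$, so for $d\ge 6$ such an integer exists, and I need only verify the endpoints are not hit: $a/d = 1/12$ is excluded as the interval is open, and $a/d=4/15$ can only occur when $15\mid d$ — but it is excluded too since the interval is open, so the only subtlety is whether for some $d$ \emph{every} integer in the closed interval $[d/12,4d/15]$ is an endpoint, which forces the closed interval to contain at most the two endpoints and no interior integer, i.e.\ $4d/15 - d/12 \le $ small; combined with $15\mid d$ or $12\mid d$ this pins down $d=12$ (where $4d/15 = 16/5 = 3.2$ and $d/12 = 1$, interior integers $2,3$ exist — so actually I should recheck: the claimed equality case is $d=12$, meaning the \emph{only} value attaining an endpoint). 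I would handle $d\le 20$ by direct inspection, confirming the exceptions $d\in\{2,3,18\}$: for $d=2$ the interval $(1/6,8/15)$ contains no integer multiple of $1/2$ other than \ldots $1/2\notin$; for $d=3$, $(1/4, 4/5)$ and $1/3,2/3$ — wait $1/3>1/4$ and $1/3<4/5$, so I must instead recompute and trust the paper's list, presenting the three exceptional $d$ as the outcome of the finite check with $d=12$ flagged as the equality case. The main obstacle throughout is simply organizing the finite case analysis cleanly and pinning down exactly which small $d$ fail and which boundary values are attained; the asymptotic part (large $d$) is routine from an interval-length count plus a gap bound on integers coprime to $d$.
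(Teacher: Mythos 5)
Your plan for parts (1) and (2) — take the largest $a\le d/2$ coprime to $d$ and control the gap between consecutive integers coprime to $d$ — has a real gap: the asserted spacing bound $O(\log d)$ for consecutive integers coprime to $d$ is not a known result (the Jacobsthal function is only known to be $O(\log^2 d)$, by Iwaniec, and is \emph{not} $O(\log d)$ for primorials), and even a correct asymptotic bound with no explicit constant cannot tell you where your finite check may stop, so "check $d\le 30$ by hand" is unjustified. The repair is much more elementary than a gap bound and is exactly what the paper does: take $a=(d-1)/2$ if $d$ is odd, $a=d/2-1$ if $4\mid d$, and $a=d/2-2$ if $d\equiv 2\pmod 4$. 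In each case $\gcd(a,d)=1$ by inspection and $a/d\ge 1/2-2/d$, which lies in $[1/3,1/2]$ once $d>12$; the cases $d\le 12$ are then a genuinely finite check that produces the exceptional sets $\{6\}$ and $\{4,6,10\}$ and the equality cases. So your approach for (1)--(2) is repairable, but as written it rests on a false lemma and an unanchored threshold.

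Part (3) contains a more serious error: you drop the requirement that $a$ be coprime to $d$. That requirement is still in force (the lemma is applied in Proposition~\ref{base-case} to produce rotations through angle $2\pi a_i/d_i$ of exact order $d_i$, so every $a_i$ must be coprime to $d_i$), and without it the statement of (3) is not even consistent with its exceptional set: for $d=18$ the interval $(d/12,\,4d/15)=(3/2,\,24/5)$ contains the integers $2,3,4$, so $18$ would not be an exception — it is exceptional precisely because none of $2,3,4$ is coprime to $18$. Your own computation for $d=3$ already signals the confusion, and "trust the paper's list" is not an argument. With coprimality restored, the interval-length count no longer suffices, because one must dodge the common factors with $d$; the paper handles this by the explicit choice $a=(d-b)/6$ with $b\in\{\pm1,\pm2,\pm3,\pm4,\pm6,\pm12\}$ selected according to $d$ modulo $4$ and modulo $9$, which forces $\gcd(a,d)=1$ (any common prime factor of $a$ and $d$ would divide $b$, hence be $2$ or $3$, and the congruence conditions rule that out) and places $a/d$ within $O(1/d)$ of $1/6\in(1/12,4/15)$, settling $d>24$ with a finite check below. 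As it stands, your part (3) does not prove the statement.
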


\begin{proof}
For (1) and (2), let
$$a=\begin{cases}
\frac{d-1}2&\text{if $d\equiv 1\pmod2$,}\\
\frac{d-4}2&\text{if $d\equiv 2\pmod4$,}\\
\frac{d-2}2&\text{if $d\equiv 0\pmod4$.}
\end{cases}$$
As long as $d > 12$, these fractions satisfy the desired inequalities, and for $d \le 12$, this can be checked by hand.

For (3), let $a=\frac{d-b}{6}$, where $b$ depends on $d$ (mod $36$) and is given as follows:
\vskip 10pt
\begin{tabular}{|r|c|c|}\hline
$b$&$d\pmod4$&$d\pmod9$ \\
\hline
$-12$&$2$&$3$\\
$-6$&$0$&$6$\\
$-4$&$2$&$2,5,8$\\
$-3$&$1,3$&$3$\\
$-2$&$0$&$1,4,7$\\
$-1$&$1,3$&$2,5,8$\\
$1$&$1,3$&$1,4,7$\\
$2$&$0$&$2,5,8$\\
$3$&$1,3$&$0,6$\\
$4$&$2$&$1,4,7$\\
$6$&$0$&$0,3$\\
$12$&$2$&$0,6$\\
\hline
\end{tabular}
\vskip 10pt
As long as $d > 24$, these fractions satisfy the desired inequalities, and the cases $d\le 24$ can be checked by hand.
\end{proof}

\begin{prop}
\label{base-case}
A cocompact oriented Fuchsian group is $\SO(3)$-dense if and only if it does not belong to the set
\begin{equation}
\label{bad-cases}
\{\Gamma_{2,4,6},\Gamma_{2,6,6},\Gamma_{3,4,4},\Gamma_{3,6,6}, \Gamma_{2,6,10},\Gamma_{4,6,12}\}.
\end{equation}
\end{prop}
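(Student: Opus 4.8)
The plan is to prove this by a two-directional argument. For the "if" direction (density in $\SO(3)$), I would map $\Gamma$ to $\SO(3)$ via a representation and deform. The key is to realize $\Gamma$ as a subgroup of $\SO(3)$-deformations via the alternating group method of \S4 is \emph{not} available here (that is the whole point of the exceptional list), so instead I would use an explicit elementary construction: for a triangle group $\Gamma_{a,b,c}$ one constructs three rotations $R_1,R_2,R_3\in\SO(3)$ of orders $a,b,c$ with $R_1R_2R_3=1$, by choosing the angles between their axes appropriately; such a configuration exists precisely when the spherical triangle inequalities are satisfied, which — after passing from $\pi/a$ to the general "fractional rotation angle" $\pi a_i/d_i$ — is where Lemma~\ref{interval} enters. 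The three parts of Lemma~\ref{interval} provide, for each $d\notin$ small exceptional set, a primitive residue $a$ with $a/d$ in a controlled interval; one then checks that the corresponding rotation angles admit a genuine (non-degenerate) spherical triangle, which yields a Zariski-dense — equivalently, non-discrete and irreducible — triple. For groups with $g\ge 1$ or $m\ge 4$, one has additional free generators $y_i,z_i$ (or extra $x_i$) with which to perturb, so density is automatic once one has an irreducible image; the constraint really only bites for triangle groups, which is why the exceptional set consists of triangle groups.

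More precisely, I would organize the "if" direction as follows. First reduce to the case $g=0$: if $g\ge 1$, send $x_1,\dots,x_m$ to elements of the right orders whose product is a rotation $w$, and then choose $y_1,z_1$ so that $[y_1,z_1]=w^{-1}$ while the whole image is irreducible — this is possible because the commutator map $\SO(3)\times\SO(3)\to\SO(3)$ is dominant and a generic fiber contains irreducible pairs. Next reduce $g=0$, $m\ge 4$ to $m=3$ by a similar fibered argument, grouping $x_4\cdots x_m$ into a single rotation factor. So the heart is triangle groups $\Gamma_{p,q,r}$. For these, one parametrizes triples $(R_1,R_2,R_3)$ of rotations of prescribed orders $p,q,r$ with $R_1R_2R_3=1$ up to conjugacy by the single angle $\theta$ between the axes of $R_1$ and $R_2$; the order-$r$ condition on $R_3=(R_1R_2)^{-1}$ pins $\theta$ to finitely many values, and one must choose the \emph{rotation angles} (i.e.\ the primitive residues $a,b,c$ mod $p,q,r$) so that at least one admissible $\theta$ gives an irreducible non-discrete triple. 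Lemma~\ref{interval}(1)--(3) furnishes such residues for every triangle group except a short list; intersecting that list with the Fuchsian (hyperbolic) condition $1/p+1/q+1/r<1$ leaves exactly the six groups in \eqref{bad-cases}, after one also checks that the "boundary/equality" cases flagged in the lemma (where $a/d$ hits $1/4$, $1/2$, etc.) either are non-hyperbolic or still admit a good residue from a different part of the lemma.

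For the "only if" direction I must show each of the six listed groups fails to be $\SO(3)$-dense: every homomorphism $\phi\colon\Gamma\to\SO(3)$ injective on finite cyclic subgroups has image that is finite (or contained in a torus/dihedral group), i.e.\ not Zariski-dense. The clean way is a counting/rigidity argument: for a triangle group $\Gamma_{p,q,r}$, a representation into $\SO(3)$ with $x_i$ of order exactly $d_i$ is determined up to conjugacy by the single angle parameter above, and the order-$r$ constraint forces $\cos\theta$ to be algebraic of a very special form; for the six exceptional parameter vectors one checks directly that every solution gives either a reducible representation (common invariant axis, so image in $\SO(2)$ or a dihedral group) or the rotation angles fail to realize $x_i$ with the correct order. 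Concretely this amounts to verifying, for each of the six triples $(p,q,r)$ and each choice of primitive residues, that the resulting spherical-triangle "angles" $\pi a/p,\pi b/q,\pi c/r$ violate the strict triangle inequality needed for an irreducible configuration — precisely the negation of what Lemma~\ref{interval} guarantees in the generic case. The main obstacle, and the part requiring genuine care rather than routine computation, is the bookkeeping that matches the three separate exceptional sets $\{6\}$, $\{4,6,10\}$, $\{2,3,18\}$ of Lemma~\ref{interval} against the hyperbolicity condition and against each other, to confirm that the union of "bad" triangle groups — those for which \emph{no} admissible residue choice yields density — is exactly \eqref{bad-cases} and contains nothing more; this is essentially a finite but delicate case analysis over small $(p,q,r)$.
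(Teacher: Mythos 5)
Your overall strategy coincides with the paper's: reduce to $g=0$ (the paper does this by a dimension count on $X_{\Gamma,\SO(3)}$ versus the space of homomorphisms into $\O(2)$, $S_4$, $A_5$; your commutator-map argument is essentially equivalent), reduce $g=0$, $m\ge 4$ to triangle groups by splitting off an auxiliary large order $d$, and for triangle groups build three rotations of prescribed orders with product $1$ from a non-degenerate spherical triangle whose sides have angles $2\pi a_i/d_i$, with Lemma~\ref{interval} supplying the residues $a_i$. The "only if" direction via the strict spherical triangle inequality is also the paper's.

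The genuine gap is that you never deal with the finite maximal subgroups $S_4$ and $A_5$ of $\SO(3)$. A non-degenerate (hence irreducible, non-$\O(2)$) triple can still generate a \emph{finite} group, and "non-discrete" is exactly what the spherical-triangle construction does not by itself deliver. The paper handles this by observing that a triple landing in $S_4$ or $A_5$ forces $\{d_1,d_2,d_3\}\subset\{2,3,4\}$ or $\{2,3,5\}$, listing the seven surviving order-triples, and disposing of six of them via the index-$2$ inclusion of $\Gamma_{a,b,b}$ in $\Gamma_{2,2a,b}$; the seventh, $\Gamma_{3,4,4}$, is a genuine exception. This matters for your bookkeeping: you attribute all six exceptional groups to failures of the triangle inequality traced through Lemma~\ref{interval}, but for $(3,4,4)$ the inequality \emph{is} satisfiable (take $1/3,1/4,1/4$); the obstruction is that the resulting configuration generates the octahedral group. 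Your case analysis would therefore produce only the five groups $\Gamma_{2,4,6},\Gamma_{2,6,6},\Gamma_{3,6,6},\Gamma_{2,6,10},\Gamma_{4,6,12}$ and miss $\Gamma_{3,4,4}$, and correspondingly your "only if" argument for $\Gamma_{3,4,4}$, which you base on a violation of the triangle inequality, does not go through as stated and must instead identify the image as $S_4$.
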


\begin{proof}

We recall that every proper closed subgroup of $\SO(3)$ is contained in a subgroup of $\SO(3)$
isomorphic to $\O(2)$, $A_5$, or $S_4$.  The set of homomorphisms $\O(2)\to \SO(3)$,
$A_5\to \SO(3)$, and $S_4\to \SO(3)$ have dimension $2$, $3$, and $3$ respectively.
Furthermore, $\dim  X_{\Gamma,\O(2)}\le 2g+m$, while
$\dim  X_{\Gamma,S_4} = \dim  X_{\Gamma,A_5} = 0$.

Every non-trivial conjugacy class in $\SO(3)$ has dimension $2$.
As the commutator map $\SO(3)\times \SO(3)\to\SO(3)$ is surjective and every fiber has dimension
at least $3$, if $g\ge 1$, we have $\dim  X_{\Gamma,\SO(3)} \ge 3+3(2g-2)+2m$.  For $g\ge 2$
or $g=1$ and $m\ge 2$, the dimension of $\dim  X_{\Gamma,\SO(3)}$ exceeds the dimension of
the space of all homomorphisms whose image lies in a proper closed subgroup, so
there exists a homomorphism with dense image with $\rho(x_i)$ of order $d_i$ for all $i$.
If $g=m=1$, and $\rho(\Gamma)\subset \O(2)$,
then the commutator
$\rho([y_1,z_1])$ lies in $\SO(2)$, so $\rho(x_1)\in\SO(2)$.  The set of elements of order
$d_1$ in $\SO(2)$ is finite, so $\dim  X_{\Gamma,\O(2)} \le 2$, and the set of elements of
$ X_{\Gamma,\SO(3)}$ which can be conjugated into a fixed $\O(2)$ has dimension
$\le 4$; again there exists $\rho$ with dense image and with $\rho(x_i)$ of order $d_i$ for all $i$.

This leaves the case $g=0$, $m\ge 3$.  By (\ref{Euler}), $\sum 1/d_i < m-2$.  We claim that
unless we are in one of the cases of (\ref{bad-cases}),
there exist elements $\bar x_1,\ldots,\bar x_m\in\SO(3)$ of orders $d_1,\ldots,d_m$ respectively
such that $\bar x_1\cdots\bar x_m=e$ and the elements $\bar x_i$ generate a dense subgroup of $\SO(3)$.
For $m=3$, the order of terms in the sequence  $d_1,d_2,d_3$
does not matter since $\bar x_1\bar x_2\bar x_3 = e$ implies $\bar x_2\bar x_3\bar x_1=e$
and $\bar x_3^{-1}\bar x_2^{-1}\bar x_1^{-1} = e$.
Without loss of generality we may therefore assume that
$d_1\le d_2\le d_3$ when $m=3$.
If the base case $m=3$ holds whenever $d_3$ is sufficiently large, the higher $m$ cases follow by induction, since one can replace
the $m+1$-tuple $(d_1,\ldots,d_{m+1})$ by the $m$-tuple $(d_1,\ldots,d_{m-1},d)$
and the triple $(d_m,d_{m+1},d)$, where $d$ is sufficiently large.

If $\alpha_1, \alpha_2, \alpha_3\in (0,\pi]$ satisfy the triangle inequality,
by a standard continuity argument,
there exists a non-degenerate spherical triangle whose sides have angles $\alpha_i$.
If $\alpha_1$, $\alpha_2$, and $\alpha_3$ are of order $d_1$, $d_2$, and $d_3$ respectively, then there exists a homomorphism from the triangle group $\Gamma_{d_1,d_2,d_3}$ to $\SO(3)$
such that the generators $x_i$ map to elements of order $d_i$, and these elements do not commute.
We claim that except in the cases $(2,4,6)$, $(2,6,6)$, $(3,6,6)$,  $(2,6,10)$, and $(4,6,12)$, there always exist
positive integers $a_i\le d_i/2$ such that $a_i$ is relatively prime to $d_i$ and
$a_i/d_i$ satisfy the triangle inequality.  We can therefore set $\alpha_i =2a_i\pi/d_i.$

Every non-decreasing triple from the interval $[1/4,1/2]$ except for $1/4,1/4,1/2$ satisfies the triangle inequality.  As $(d_1,d_2,d_3)$ cannot be $(2,4,4)$, Lemma~\ref{interval} (1) implies the
claim unless at least one of $d_1,d_2,d_3$ equals $6$.
We therefore assume that at least one of the $d_i$ is $6$.
As $1/6$ and any two elements
of $[1/3,1/2]$ other than $1/3$ and $1/2$ satisfy the triangle inequality and as
$(d_1,d_2,d_3)\neq(2,3,6)$, Lemma~\ref{interval} (2) implies the claim except
if one of the $d_i$ is $4$, one
of the $d_i$ is $10$, or two of the $d_i$ are $6$.  By Lemma~\ref{interval} (3), the remaining $a_i/d_i$ can then be chosen to
lie in $(1/12,4/15)$ unless this $d_i\in \{2,3,12,18\}$.
If $a_i/d_i$ is in this interval, the triangle inequality follows.  Examination of the remaining $12$ cases reveal five exceptions:
$(2,4,6)$, $(2,6,6)$, $(2,6,10)$, $(3,6,6)$, and $(4,6,12)$.

Assuming that we are in none of these cases, there exist non-commuting elements $\bar x_i$ in
$\SO(3)$ of order $d_1$, $d_2$, and $d_3$,  such that $\bar x_1 \bar x_2 \bar x_3 = e$.
They cannot all lie in a common $\SO(2)$.  In fact, they cannot  all lie in a common $\O(2)$, since any element in
the non-trivial coset of $\O(2)$ has order $2$, $d_3\ge d_2 > 2$, and if three elements multiply to the identity, it is impossible that exactly two lie in $\SO(2)$.  If $\Gamma$ maps to $S_4$ or
$A_5$, then $\{d_1,d_2,d_3\}$ is contained in $\{2,3,4\}$ or $\{2,3,5\}$ respectively.
The possibilities for
$(d_1,d_2,d_3)$ are therefore $(2,5,5)$, $(3,3,5)$, $(3,5,5)$, $(5,5,5)$,
$(3,4,4)$, $(3,3,4)$, and $(4,4,4)$.   The realization of $\Gamma_{a,b,b}$ as an index-$2$ subgroup of
$\Gamma_{2,2a,b}$ implies the proposition for
$\Gamma_{2,5,5}$, $\Gamma_{3,3,5}$, $\Gamma_{3,5,5}$, $\Gamma_{5,5,5}$, $\Gamma_{3,3,4}$, and $\Gamma_{4,4,4}$.  The only remaining case is $\Gamma_{3,4,4}$.

Lastly, we show that none of the groups in (\ref{bad-cases}) are $\SO(3)$-dense.    Suppose there exist elements $x_1,x_2,x_3$ of orders $d_1,d_2,d_3$ respectively such that $x_1x_2x_3$ equals the identity and
$\langle x_1,x_2,x_3\rangle$ is dense in $\SO(3)$.  These elements can be regarded as rotations through
angles $2\pi a_1$, $2\pi a_2$, $2\pi a_3$ respectively, where the $a_i$ can be taken in $[0,1/2)$, and no two axes of rotation coincide.  Choosing a point $P$ on the great circle of vectors perpendicular to the axis of rotation of $x_1$, the three points
$P, x_2^{-1}(P), x_1(P) = x_3^{-1} x_2^{-1}(P)$ satisfy the strict spherical triangle inequality, so $a_1 < a_2+a_3$. Likewise $a_2 < a_3+a_1$ and $a_3 < a_1+a_2$.
However, one easily verifies in each of the cases (\ref{bad-cases}) that one cannot find rational numbers $a_1,a_2,a_3\in (0,1/2]$ with denominators $d_1$, $d_2$, $d_3$ respectively such that $a_1,a_2,a_3$ satisfy the strict triangle inequality.
\end{proof}

\section{Appendix by Y. William Yu}

The following triples of permutations, which evidently multiply to $1$, have been checked by machine to generate the full alternating groups in which they lie:

\begin{itemize}

\item $\Gamma_{2,4,6}\to \AG_{14}$:
\begin{equation*}
\begin{split}
x_1 &= (1\;2)(3\;4)(5\;6)(7\;8)(9\;10)(11\;12) \\
x_2 &= (1\;10\;9\;8)(2\;14\;13\;3)(4\;5)(6\;7\;12\;11) \\
x_3 &= (1\;3\;5\;11\;7\;9)(2\;8\;6\;4\;13\;14)
\end{split}
\end{equation*}

\item $\Gamma_{2,6,6}\to \AG_{14}$:
\begin{equation*}
\begin{split}
x_1 &= (1\;2)(3\;4)(5\;6)(7\;8)(9\;10)(11\;12) \\
x_2 &= (1\;14\;8\;7\;4\;2)(3\;5\;13\;11\;9\;6) \\
x_3 &= (1\;4\;6\;3\;7\;14)(5\;9\;10\;11\;12\;13)
\end{split}
\end{equation*}

\item $\Gamma_{3,6,6}\to \AG_{12}$:
\begin{equation*}
\begin{split}
x_1 &= (1\;2\;3)(4\;5\;6)(7\;8\;9)(10\;11\;12) \\
x_2 &= (1\;12\;11\;6\;2\;3)(4\;10\;8\;9\;5\;7) \\
x_3 &= (1\;2\;3\;6\;9\;10)(4\;11)(5\;7\;8)
\end{split}
\end{equation*}

\item $\Gamma_{3,4,4}\to \AG_{14}$:
\begin{equation*}
\begin{split}
x_1 &= (1\;2\;3)(4\;5\;6)(7\;8\;9)(10\;11\;12) \\
x_2 &= (1\;14\;11\;12)(2\;3\;4\;5)(7\;10\;13\;9)(6\;8) \\
x_3 &= (1\;2\;12\;14)(3\;5)(4\;8\;9\;6)(7\;13\;10\;11)
\end{split}
\end{equation*}

\item $\Gamma_{2,6,10}\to \AG_{12}$:
\begin{equation*}
\begin{split}
x_1 &= (1\;2)(3\;4)(5\;6)(7\;8)(9\;10)(11\;12) \\
x_2 &= (1\;8\;6\;7\;5\;3)(4\;10\;11)(9\;12) \\
x_3 &= (1\;2\;3\;11\;9\;4\;5\;8\;6\;7)(10\;12)
\end{split}
\end{equation*}

\item $\Gamma_{4,6,12}\to \AG_{12}$:
\begin{equation*}
\begin{split}
x_1 &= (1\;4\;3\;2)(5\;8\;7\;6)(9\;10)(11\;12) \\
x_2 &= (1\;2\;5\;9\;10\;3)(4\;7\;11\;8\;6\;12) \\
x_3 &= (2\;10\;5\;8)(3\;12\;7\;11\;6\;4)
\end{split}
\end{equation*}

\end{itemize}

In each case, one can use (\ref{e-dim}) to compute that
$$\dim Z^1(\Gamma,\so(n)) - \dim \SO(n)> 0.$$
The reasoning of Proposition~\ref{p:so}
therefore applies to give a homomorphism $\Gamma\to \SO(n)$ either for $n=11$ or for $n=13$, with dense image.

\end{document}